\documentclass[reqno,11pt]{amsart}
\usepackage{amsmath,amssymb,amsthm,graphicx,a4wide,enumerate,url}
\usepackage[small,bf]{caption} 
\usepackage{subcaption}
\usepackage{amsmath}
\usepackage{color}
\usepackage{lipsum}
\usepackage{amsfonts}
\usepackage{graphicx}
\usepackage{epstopdf}
\usepackage{algorithmic}
\usepackage[outercaption]{sidecap}
\usepackage{amsopn}
\usepackage{enumerate}
\usepackage{array}
\usepackage{makecell}
\usepackage{adjustbox}
\usepackage{calc}
\usepackage{accents}
\usepackage{hyperref}
\usepackage{cleveref}
\usepackage{romannum}
\usepackage{float}
\usepackage{multirow}
\usepackage{algorithm2e}
\usepackage{array, boldline, makecell, booktabs}

\usepackage{xcolor}
\hypersetup{
  colorlinks   = true, %Colours links instead of ugly boxes
  urlcolor     = blue, %Colour for external hyperlinks
  linkcolor    = blue, %Colour of internal links
  citecolor   = red %Colour of citations
}

\usepackage{bm}
\usepackage{tikz}
\usetikzlibrary{positioning}
\usepackage{tcolorbox}
\usepackage{appendix}

\AtBeginDocument{\pagenumbering{arabic}}

\newtheorem{thm}{Theorem}
\newtheorem{defn}{Definition}

\newtheorem{lemma}{Lemma}
\newtheorem{remark}{Remark}

\newcommand{\tIcim}{\bm{c}}
\newcommand{\tIAim}{A}
\newcommand{\tIbim}{\bm{b}}
\newcommand{\tIcex}{\tilde{\tIcim}}
\newcommand{\tIAex}{\tilde{A}}
\newcommand{\tIbex}{\tilde{\tIbim}}

\newcommand{\uh}{\hat{u}}
\newcommand{\vh}{\hat{v}}
\newcommand{\wh}{\hat{w}}
\newcommand{\qhat}{\hat{q}}
\newcommand{\gh}{\hat{g}}
\newcommand{\Yh}{\hat{Y}}

\DeclareMathOperator{\sech}{sech}
\newcommand{\order}{{\mathcal O}}
\newcommand{\ukdv}{\eta}
\newcommand{\qh}{\bm{q}}
\newcommand{\Lh}{\bm{L}}
\newcommand{\Nh}{\bm{N}}

\begin{document}
\parskip.9ex

%===========================================================================
%=================================================================== Titles.
\title[AP Exponential Integrators for the KdVH System]
{Asymptotic-Preserving Exponential Integrators applied to the Hyperbolic Korteweg-de Vries System}

\author[A. Biswas]{Abhijit Biswas}
\address[Abhijit Biswas]
{Department of Mathematics, Indian Institute of Technology Kanpur, Kanpur 208016, India} 
\email{abhijit@iitk.ac.in}
%\urladdr{https://math.temple.edu/\~{}tug14809}

\author[S. Boscarino]{Sebastiano Boscarino}
\address[Sebastiano Boscarino]
{Department of Mathematics and Computer Science, University of Catania, Viale Andrea Doria 6, 95125 Catania, Italy} 
\email{sebastiano.boscarino@unict.it}

\author[D. I. Ketcheson]{David I. Ketcheson}
\address[David I. Ketcheson]
{Computer, Electrical, and Mathematical Sciences \& Engineering Division \\ 
King Abdullah University of Science and Technology \\ Thuwal 23955 \\ Saudi Arabia} 
\email{david.ketcheson@kaust.edu.sa}
\urladdr{https://www.davidketcheson.info}

\author[G. Russo]{Giovanni Russo}
\address[Giovanni Russo]
{Department of Mathematics and Computer Science, University of Catania,
Viale Andrea Doria 6, 95125 Catania, Italy} 
\email{giovanni.russo@unict.it}

\subjclass[2000]{65M12, 65L04, 65L06, 35Q53}
\keywords{exponential integrators, asymptotic-preserving methods, hyperbolic relaxation systems, Korteweg--de Vries equation}

%===========================================================================
\begin{abstract}
We study the application of exponential time integration methods, coupled with Fourier pseudospectral
space discretization, to the hyperbolic Korteweg-de Vries (KdVH) system.  We investigate
the asymptotic preserving (AP) properties of such discretizations,
showing that, in general, Lawson methods are not asymptotic preserving, because the auxiliary (derivative-approximating) variables do not satisfy the limit equilibrium manifold, whereas exponential time differencing (ETD) methods are AP for all components, including both the solution variable and the auxiliary variables.
We also present an efficient numerical implementation based on an exact formula for the
matrix exponential of this system, and compare it with previously-proposed ImEx Runge-Kutta
time integration, showing that exponential methods can be competitive in this context.
\end{abstract}
%===========================================================================

\maketitle
%---------------------%
\section{Introduction}
%---------------------%
Recently, many authors have proposed systems of hyperbolic partial differential equations
(PDEs) that are designed to approximate higher-order PDEs (see e.g. 
\cite{toro2014advection,mazaheri2016first,chesnokov2019hyperbolic,escalante2019efficient,surov2020hyperbolization,gavrilyuk2022hyperbolic,besse2022perfectly,guermond2022hyperbolic,bleecke2025asymptotic,ketcheson2025approximation,biswas2025hyperbolic}).
These systems are based on
the use of a relaxation parameter, such that the solution of the hyperbolic system tends
to that of the original system in the relaxation limit.  One potential advantage of the
hyperbolic system is its use in numerical discretization, since it avoids the stiffness
that accompanies high-order spatial derivative terms.  However, as the relaxation parameter
tends to zero, the wave speed of the hyperbolic system tends to infinity, so that a new kind of
stiffness appears and must be dealt with if a high-accuracy approximation is required.

One example of such a hyperbolic relaxation system is an approximation of the Korteweg-de Vries (KdV) equation
\begin{align} \label{kdv}
    \eta_t + \eta \eta_x + \eta_{xxx} & = 0,
\end{align}
for which a hyperbolic approximation (referred to herein as KdVH) takes the form \cite{besse2022perfectly}
\begin{subequations}\label{Eq:kdvH}
\begin{align}
\partial_t u + u\partial_x u + \partial_x w &= 0 \;, \\
\tau \partial_t v &= (\partial_x v - w) \;, \\
\tau \partial_t w &= -(\partial_x u - v) \;,
\end{align}
\end{subequations}
with relaxation parameter $\tau>0$, where $u$ approximates $\eta$ and $v, w$ approximate $\eta_x, \eta_{xx}$ respectively. 
This system was originally proposed in \cite{besse2022perfectly} and has
recently been studied in \cite{biswas2025traveling}, where it was discretized with ImEx Runge-Kutta
methods in time and finite differences in space.
Therein it was shown that the time integration methods, under certain conditions
on the Butcher coefficients, provide an \emph{asymptotic-preserving} (AP) discretization \cite{jin2022asymptotic};
i.e., in the relaxation limit, the discrete system tends to a consistent discretization
of the KdV equation, { provided the initial conditions of the system are {\em well-prepared}:
\[
    \lim_{\tau\to0} v(x,0;\tau) = \lim_{\tau\to0}  \partial_x u(x,0;\tau), \quad 
    \lim_{\tau\to0} w(x,0;\tau) = \lim_{\tau\to0} \partial_x v(x,0;\tau)
\]
See Section \ref{sec:AP} of the present work for a more detailed discussion of well-prepared initial data.
}

Given that the stiff terms in \eqref{Eq:kdvH} are linear, it is natural to consider
alternatively the application of exponential integrators, wherein the effects of the
stiff linear terms are computed (in principle) exactly, through the use of the
matrix exponential.  One might even expect exponential methods to outperform
ImEx methods in this context, since the stiff terms are non-dissipative and
exponential methods seem more suited to preserving this property on the discrete level.
In the present work we study the application of exponential integrators for the
time discretization of \eqref{Eq:kdvH}. 
The main goal is to explore and analyze the asymptotic behavior of different classes of exponential methods for small $\tau$.
We consider one-step exponential methods
of two types: simple Lawson methods as well as exponential time differencing
(ETD) methods.  
We prove that ETD methods are asymptotic preserving for all components
of the system, including the solution variable $u$ and the auxiliary
(derivative-approximating) variables $v$ and $w$.
For Lawson methods, our analysis establishes that Lawson--Euler is
not asymptotic preserving in general. Our numerical experiments with
Lawson methods indicate that $u$ converges to the corresponding
numerical solution of the limiting KdV equation \eqref{kdv}
as $\tau\to0$, whereas $v$ and $w$ generally fail to satisfy
the equilibrium relations.
% We show that these methods are all asymptotic preserving
% with respect to the solution variable $u$, whereas only the ETD methods are
% AP with respect to the auxiliary (derivative-approximating) variables $v$ and $w$. 
% We show that, in general, Lawson methods are not asymptotic preserving; in practice with these methods we observe that the value of $u$ converges to the solution $\eta$ of the limit (KdV) equation \eqref{kdv} as $\tau \to 0$, but the auxiliary (derivative-approximating) variables $v$ and $w$ do not converge to the first and second spatial derivatives of $\eta$. By contrast, exponential time differencing (ETD) methods are AP for all components, including both the solution variable and the auxiliary variables.
The essential difference between these classes of methods seems to be rooted in the behavior of the functions $\phi_k$ that appear in exponential methods, with quite different behavior between the case $k=0$ versus $k>0$ in the limit $\tau\to0$ (see Section \ref{sec:AP}).

There exist a few studies of AP properties of exponential methods, all in the context of dissipative dynamics (generally in the context of kinetic equations, e.g. \cite{dimarco2011exponential, hu2019second, li2014exponential, li2014exponential11}).
To our knowledge, the present work is the first AP analysis of exponential methods applied to purely dispersive systems.

Although our comparison in this work involves only the specific KdVH system
with pseudospectral space discretization, we believe it provides an interesting
case study for the broader question of the relative efficiency of ImEx versus
exponential methods for hyperbolized systems and even for general PDEs.

The paper is organized as follows.  In Section \ref{sec:discretization} we describe the numerical discretization methods that will be studied and tested.  In Section \ref{sec:AP}, we analyze the AP property for a wide range of exponential methods applied to the KdVH system.
In Section \ref{sec:tests} we verify the results of the theoretical analysis through numerical tests.  We also
investigate the relative computational efficiency of ImEx and exponential
methods for this problem.  This is a delicate question and the answer  intimately depends
on the method used to compute the matrix exponential as well as the linear
solver used for the ImEx methods.

%-------------------------------%
\section{Numerical discretization}
\label{sec:discretization}
%-------------------------------%

In this section we describe the numerical discretizations that will be analyzed and tested.
In space, we use a standard Fourier pseudospectral
method, briefly reviewed in Section \ref{sec:space}.  In time, we are mainly interested in the behavior of exponential methods, which we review in Section \ref{sec:ERK}.  We will also make some comparisons with implicit-explicit Runge-Kutta methods, which we review in Section \ref{sec:ImEx}.
%=====================================================%
\subsection{Spatial discretization: pseudospectral Fourier collocation} \label{sec:space}
%=====================================================%
To study the AP property of the exponential methods, we formulate and evolve the KdV equation and the KdVH system in Fourier space. Consider the KdV equation \eqref{kdv} with periodic boundary conditions. Taking the Fourier transform and replacing the nonlinear convolution via the usual pseudospectral approximation, we obtain the KdV equation in Fourier space:
\begin{align}\label{Eq:kdv_Fourier_cont}
    \partial_t \hat{\ukdv}(\xi,t) & = -(i \xi)^3\hat{\ukdv}(\xi,t) - \frac{i \xi}{2} \mathcal{F} \left( \left[ \mathcal{F}^{-1}\left(\hat{\ukdv}(\xi,t)\right) \right]^2\right) \;.
\end{align}
Here $\xi$ is the wavenumber, $\hat{\ukdv}$ is the Fourier transform of $\eta$, and $\mathcal{F}(\cdot)$ is the Fourier transform operator. Similarly, we apply the Fourier transform to the KdVH system \eqref{Eq:kdvH} to obtain
\begin{align}\label{Eq:kdvh_Fourier_cont}
    \partial_t \hat{q}(\xi,t) & = L(\tau)\hat{q}(\xi,t) + N\left(\xi,\hat{q}(\xi,t)\right) \;,
\end{align}
where $\hat{q}(\xi,t) =
[\hat{u}(\xi,t), \ \hat{v}(\xi,t), \ \hat{w}(\xi,t)]^{T}$ and 

\begin{align} \label{Ltaudef}
 L(\tau) =\begin{bmatrix}
        0 & 0 & -i \xi \\
        0 & \frac{i\xi}{\tau} & -\frac{1}{\tau} \\
        -\frac{i\xi}{\tau} & \frac{1}{\tau} & 0
\end{bmatrix} \;, \quad \text{and} \ \ 
N\left(\xi,\hat{q}(\xi,t)\right) = 
\begin{bmatrix}
- \frac{i \xi}{2} \mathcal{F} \left( \left[ \mathcal{F}^{-1}\left(\hat{u}(\xi,t)\right) \right]^2\right) \\
0 \\
0
\end{bmatrix} \;.
\end{align}
% \begin{align} \label{Ltaudef}
% \hat{q}(\xi,t)= 
%             \begin{bmatrix}
%                 \hat{u}(\xi,t) \\ 
%                 \hat{v}(\xi,t) \\
%                 \hat{w}(\xi,t)
%             \end{bmatrix} \;, \ \
%  L(\tau) =\begin{bmatrix}
%         0 & 0 & -i \xi \\
%         0 & \frac{i\xi}{\tau} & -\frac{1}{\tau} \\
%         -\frac{i\xi}{\tau} & \frac{1}{\tau} & 0
% \end{bmatrix} \;, \quad \text{and} \ \ 
% N\left(\xi,\hat{q}(\xi,t)\right) = 
% \begin{bmatrix}
% - \frac{i \xi}{2} \mathcal{F} \left( \left[ \mathcal{F}^{-1}\left(\hat{u}(\xi,t)\right) \right]^2\right) \\
% 0 \\
% 0
% \end{bmatrix} \;.
% \end{align}
In Section \ref{sec:AP}, where we study the AP property with respect
to exponential time discretization, we will use \eqref{Eq:kdvh_Fourier_cont} as a starting point, leaving the wavenumber continuous.

To solve \eqref{Eq:kdv_Fourier_cont} or \eqref{Eq:kdvh_Fourier_cont} numerically, we discretize in $\xi$ on a grid with $m$ nodes, using Fourier collocation.  For
the KdV equation \eqref{Eq:kdv_Fourier_cont}, we obtain the following ordinary differential equations for the discrete Fourier modes:
\begin{align}
    \hat{\ukdv}_j'(t) & = -(i \xi_j)^3\hat{\ukdv}_{j}(t) - \frac{i \xi_j}{2} \left[\mathcal{F} \left( \left( \mathcal{F}^{-1}\left(\hat{\ukdv}(t)\right) \right)^2\right) \right]_j
\end{align}
where $\xi_j$ denote the wavenumbers corresponding to the chosen grid. We can write this semidiscretization in the form
\begin{align}\label{Eq:kdv_Fourier}
    \partial_t \hat{\ukdv}(t) & = \tilde{L} \hat{\ukdv}(t) + \tilde{N}(\hat{\ukdv}(t)) \;,
\end{align}
where 
$\hat{\ukdv}(t) = \begin{bmatrix}
 \hat{\ukdv}_1(t),
 \hat{\ukdv}_2(t),
 \ldots,
 \hat{\ukdv}_m(t)
 \end{bmatrix}^T$, $\tilde{L} = \operatorname{diag}\left(-(i\xi_1)^3, -(i\xi_2)^3, \ldots, -(i\xi_m)^3\right)$, and $\tilde{N}(\hat{\ukdv}(t))_j = 
 -\frac{i \xi_j}{2} \left[\mathcal{F} \left( \left( \mathcal{F}^{-1}\left(\hat{\ukdv}(t)\right) \right)^2\right) \right]_j$.
% \begin{subequations}\label{Eq:kdv_Fourier}
% \begin{align}
%     \partial_t \hat{\ukdv}(t) & = \tilde{L} \hat{\ukdv}(t) + \tilde{N}(\hat{\ukdv}(t)) \;,
% \end{align}
% where 
% \begin{align}
%  \hat{\ukdv}(t) = \begin{bmatrix}
%  \hat{\ukdv}_1(t)  \\
%  \hat{\ukdv}_2(t) \\
%  \vdots \\
%  \hat{\ukdv}_m(t)\\
%  \end{bmatrix} \;, \quad
%  \tilde{L} =\begin{bmatrix}
%     {-(i\xi_1)^3}  &  {}   & {}    &  {}  \\
%      {}    &  {-(i\xi_2)^3}  &  {}   & {}    \\
%      {}    &    {}   & {\ddots}  &    {}\\
%      {}    &   {}   &   {} & {-(i\xi_m)^3}
%  \end{bmatrix}\;, \quad \text{and} \quad
%  \tilde{N}(\hat{\ukdv}(t))_j = 
%  -\frac{i \xi_j}{2} \left[\mathcal{F} \left( \left( \mathcal{F}^{-1}\left(\hat{\ukdv}(t)\right) \right)^2\right) \right]_j.
% \end{align}
% \end{subequations}
We can semi-discretize the KdVH system \eqref{Eq:kdvh_Fourier_cont} in a similar way. Using the same equispaced grid with $m$ nodes
and applying a pseudospectral Fourier discretization yields
\begin{align} \label{semidiscretization}
    \partial_t \hat{\qh}(t) & = \tilde{\Lh} \hat{\qh}(t)  + \tilde{\Nh}(\hat{\qh}(t)),
\end{align}
where $\hat{\qh}(t) = \begin{bmatrix}
\hat{q}_1(t),
\hat{q}_2(t),
 \ldots,
\hat{q}_m(t)
 \end{bmatrix}^T$, $\tilde{\Lh} = \operatorname{diag}(L_1,L_2,\ldots,L_m)$, and $\tilde{\Nh}(\hat{\qh}(t))$ is the vector  
$\begin{bmatrix}
       \tilde{\Nh}(\hat{\qh}(t))_1,
       \tilde{\Nh}(\hat{\qh}(t))_2,
    \ldots,
      \tilde{\Nh}(\hat{\qh}(t))_m 
\end{bmatrix}^T$ with $\hat{q}_j(t) = \hat{q}(\xi_j,t)$ and
\begin{align} \label{Lblock}
%\hat{q}_j(t) = \begin{bmatrix}
%                    \hat{u}(\xi_j,t) \\
%                    \hat{v}(\xi_j,t) \\
%                    \hat{w}(\xi_j,t)
%                \end{bmatrix} \;, \quad
 L_j =\begin{bmatrix}
        0 & 0 & -i \xi_j \\
        0 & \frac{i\xi_j}{\tau} & -\frac{1}{\tau} \\
        -\frac{i\xi_j}{\tau} & \frac{1}{\tau} & 0
\end{bmatrix} \;, \quad \text{and} \quad
\tilde{\Nh}(\hat{\qh}(t))_j = 
\begin{bmatrix}
 -\frac{i \xi_j}{2} \left[\mathcal{F} \left( \left( \mathcal{F}^{-1}\left(\hat{u}(t)\right) \right)^2\right) \right]_j  \\
0 \\
0
\end{bmatrix} \;.
\end{align}

% \begin{subequations} \label{semidiscretization}
% \begin{align}
%     \partial_t \hat{\qh}(t) & = \tilde{\Lh} \hat{\qh}(t)  + \tilde{\Nh}(\hat{\qh}(t)),
% \end{align}
% where 
% \begin{align}
% \hat{\qh}(t) = \begin{bmatrix}
% \hat{q}_1(t) \\
% \hat{q}_2(t) \\
%  \vdots \\
% \hat{q}_m(t) \\
%  \end{bmatrix} \;, \quad
%  \tilde{\Lh} =
% \begin{bmatrix}
%     \boxed{L_1}  &  {}   & {}    &  {}  \\
%      {}    &  \boxed{L_2}  &  {}   & {}    \\
%      {}    &    {}   & \ddots  &    {}\\
%      {}    &   {}   &   {} & \boxed{L_m}
%  \end{bmatrix}\;, \quad \text{and} \quad
%  \tilde{\Nh}(\hat{\qh}(t)) = 
%  \begin{bmatrix}
%        \tilde{\Nh}(\hat{\qh}(t))_1  \\
%        \tilde{\Nh}(\hat{\qh}(t))_2 \\
%     \vdots \\
%       \tilde{\Nh}(\hat{\qh}(t))_m \\
% \end{bmatrix} \;.
% \end{align}
% \end{subequations}
% with $\hat{q}_j(t) = \hat{q}(\xi_j,t)$,
% \begin{align} \label{Lblock}
% %\hat{q}_j(t) = \begin{bmatrix}
% %                    \hat{u}(\xi_j,t) \\
% %                    \hat{v}(\xi_j,t) \\
% %                    \hat{w}(\xi_j,t)
% %                \end{bmatrix} \;, \quad
%  L_j =\begin{bmatrix}
%         0 & 0 & -i \xi_j \\
%         0 & \frac{i\xi_j}{\tau} & -\frac{1}{\tau} \\
%         -\frac{i\xi_j}{\tau} & \frac{1}{\tau} & 0
% \end{bmatrix} \;, \quad \text{and} \quad
% \tilde{\Nh}(\hat{\qh}(t))_j = 
% \begin{bmatrix}
%  -\frac{i \xi_j}{2} \left[\mathcal{F} \left( \left( \mathcal{F}^{-1}\left(\hat{u}(t)\right) \right)^2\right) \right]_j  \\
% 0 \\
% 0
% \end{bmatrix} \;.
% \end{align}

%===============================%
\subsection{Temporal discretization}
%===============================%
Motivated by the semi-discrete system \eqref{semidiscretization}, in this section we consider an abstract system of ODEs of the form
\begin{align}\label{Eq:semilienar_ode}
    \partial_t y(t) & = Ly(t) + N(t,y) \;,
\end{align}
%\giovanni{We have to fix the notation, and later on refer to Eq(6) for the 3x3 matrix L.}
where $y \in \mathbb{R}^d$, $L \in \mathbb{R}^{d \times d}$, and $N$ is a nonlinear function, $N: \mathbb{R} \times \mathbb{R}^{d}  \rightarrow \mathbb{R}^{d}$.
We are mainly interested
in evaluating the effectiveness of exponential methods for
the integration of \eqref{semidiscretization}.
We study two types of exponential methods: Lawson, or Integrating Factor (IF)
methods, and Exponential Time Differencing (ETD) methods.
We also describe some ImEx Runge-Kutta methods
that will be used for comparison later.

%-----------------------------------------%
\subsubsection{Exponential Runge-Kutta methods} \label{sec:ERK}
%-----------------------------------------%
Exponential RK methods provide a framework for the time integration of stiff differential equations of the form \eqref{Eq:semilienar_ode} \cite{ hochbruck2005explicit}.
An exponential RK method for \eqref{Eq:semilienar_ode} reads
\begin{subequations}\label{ExInt2}
\begin{align}
    Y_i &= \chi_{i}(\Delta t L) y_{n} + \Delta t \sum_{j=1}^{s} a_{ij}(\Delta t L) N(t_{n} + c_j \Delta t, Y_j), \quad i = 1, \dots, s, \\
    y_{n+1} &=\chi_0( \Delta t L ) y_{n} + \Delta t  \sum_{i=1}^{s} b_i(\Delta t L) N(t_{n} + c_i \Delta t, Y_i)\;.
\end{align}
\end{subequations}
The coefficient functions $\chi_0$, $\chi_i$, $a_{ij}$, and $b_{i}$ are constructed from exponential functions. 
%It turns out that the coefficients of the method \eqref{ExInt2} have to satisfy the conditions 
%$$
%\sum_{i = 1}^s b_i(z) = \phi_0(z)-1, \quad \sum_{i = 1}^s a_{ij}(0) = c_i, \, \, 1 \le i\le s.
%$$
Consistency of the method requires that
$\chi_0(0) = \chi_{i}(0) = 1$, and that 
$$
\sum_{i = 1}^s b_i(0) = 1, \quad \sum_{j = 1}^s a_{ij}(0) = c_i, \, \, 1 \le i\le s.
$$
Thus, if we set $L=0$ in \eqref{ExInt2}, we obtain a Runge-Kutta (RK) method with coefficients $b_i := b_i(0)$ and $a_{ij}:= a_{ij}(0)$; we refer to this as the \emph{underlying method}.
An exponential method can be specified, similarly to an RK method, by displaying its coefficients in an extended Butcher table:
\begin{equation}\label{Btab}
\begin{array}{c|cccc|c}
    c_1 & a_{11}(z) &a_{12}(z) & \quad\cdots\quad & a_{1s}(z) & \chi_{1}(z) \\
    c_2 & a_{21}(z) &a_{22}(z) & \quad\cdots\quad & a_{2s}(z) & \chi_{2}(z) \\
    \vdots & \vdots &  & \vdots & \vdots & \vdots \\
    c_s & a_{s1}(z) & a_{s2}(z) & \quad\cdots\quad & a_{ss}(z) & \chi_{s}(z) \\
    \hline
    & b_1(z) & b_2(z) & \quad\cdots\quad & b_s(z) & \chi_0(z)
\end{array} \;.
\end{equation}
% For $z\ne 0$, the coefficients of the method \eqref{Btab} must satisfy the consistency conditions \cite{hochbruck2005explicit}
% \begin{equation}\label{2cond}
% \sum_{i = 1}^s b_i(z) = \frac{\chi_0(z)-1}{z}, \quad \sum_{j = 1}^s a_{ij}(z) = \frac{\chi_{i}(z)-1}{z}, \, \, 1 \le i\le s.
% \end{equation}
For $z\ne 0$, the method \eqref{Btab} preserves equilibria at both
the stages and the time steps if its coefficients satisfy
\cite{hochbruck2005explicit}
\begin{equation}\label{2cond}
\sum_{i=1}^s b_i(z)=\frac{\chi_0(z)-1}{z}, \quad
\sum_{j=1}^s a_{ij}(z)=\frac{\chi_i(z)-1}{z},
\qquad 1\le i\le s.
\end{equation}
These conditions hold for the ETD methods considered here,
but not generally for Lawson methods. We only consider methods with 
\begin{equation}\label{1cond}
\chi_0(z) = e^{z}, \quad  \chi_{i}(z) = e^{c_i z},
\quad \quad c_i\ge 0, \quad \quad  1 \le i \le s.\\
\end{equation}
Note that as $z\to0$ the consistency conditions coincide with those of classical Runge-Kutta methods. The main motivation for considering exponential schemes of the form \eqref{ExInt2} is to construct explicit methods. To this end, we restrict our attention to schemes with 
$c_1 = 0$ which implies $\chi_1(z)=1$, and with coefficients $a_{i,j}=0$
for all  $1\le i\le j \le s$.
 
%We denote the internal stage approximation as $Y_i \approx y(t_n + c_i \Delta t)$ and the solution approximation as $y_n \approx y(t_n)$. 
%The coefficient $a_{ij}(0) = a_{ij}$, and $b_i(0) = b_i$, with $a_{ij}$ and $b_i$ real numbers, are the coefficients of a Runge–Kutta scheme, \cite{}.\\
%Within the family of exponential integrators, it is essential to distinguish between \emph{Integrating factor} (IF) and \emph{Exponential time differentiating} RK \emph{ETD (ETDRK)} methods.

Given a classical (underlying) RK method with coefficients $b_i$ and $a_{ij}$, the 
corresponding IF method \cite{lawson1967generalized} has
coefficients
\begin{equation}\label{coeffL}
a_{ij}(\Delta t L) = a_{ij}e^{(c_i-c_j)\Delta t L},\quad b_{i}(\Delta t L) = b_ie^{(1-c_i)\Delta t L}, \quad i,j = 1,...,s.
\end{equation}
The order of an IF method is the same as that of the underlying method.
The simplest IF method is the explicit Lawson-Euler method, given by
\begin{equation} \label{Lawson-Euler}
y_{n+1} =\chi_0( \Delta t L ) \left(y_{n} + \Delta t  N(t_{n}, y_n)\right).
\end{equation}

In order to define the class of ETD methods we recall the so-called $\phi$-functions:
$$
\phi_k(z) = \int_0^1 e^{(1-\theta)z}\frac{\theta^{k-1}}{(k-1)!}d \theta, \; z \in \mathbb{C}, \; k = 1,2,...
$$
which satisfy the recurrence relation (see e.g. \cite{hochbruck2010exponential})
$$
\phi_{0}(z) = e^z, \quad \phi_{k}(z) = \frac{\phi_{k-1}(z) - \frac{1}{(k-1)!}}{z}. 
$$
%The first few functions are given by
%\[
%    \phi_1(z) = \frac{e^z-1}{z}, \quad \phi_2(z) = \frac{e^z-1-z}{z^2}, \quad \phi_3(x) = \frac{e^z-1-z-z^2/2}{z^3}
%\]
and, in general
\begin{equation}
    \label{eq:phi_k}
    \phi_k(z) = \left( e^z-\sum_{j=0}^{k-1}\frac{z^j}{j!}\right)z^{-k} = \sum_{j=0}^\infty \frac{z^j}{(j+k)!}.
\end{equation}
Usually the weights $b_i(z)$ and coefficients $a_{ij}(z)$  of an ETD method are linear combinations of the functions $\phi_k(z)$ and $\phi_{k}(c_i z)$,
respectively \cite{hochbruck2005explicit}. Taking into account \eqref{1cond}, the consistency conditions \eqref{2cond} become
\begin{equation*}
\sum_{i = 1}^s b_i(z) = \phi_1(z), \quad \sum_{j = 1}^s a_{ij}(z) = c_i \phi_{1}(c_iz), \, \, 1 \le i\le s.
\end{equation*}
%$$
% b_{i}(\Delta t L) =\int_0^1 e^{(1-\theta)\Delta t L }p_i(\theta) d\theta, \quad 
%a_{ij}(\Delta t L) = \int_0^1 e^{(1-\theta)c_i\Delta t L }p_{ij}(\theta) d\theta, \quad i,j = 1,2,...,s
%$$
%where $p_i$ and $p_{ij}$ are polynomials \cite{cox2002exponential}.
To save space, we define $\phi_{k,i} = \phi_{k}(c_i \Delta t L)$.
The simplest ETD method is the explicit ETD-Euler method given by
\begin{equation}
y_{n+1} =\phi_0( \Delta t L ) y_{n} + \Delta t  \phi_1(\Delta t L)N(t_{n}, y_{n}).
\end{equation}
 
An exponential Runge–Kutta method is said to have {\em non-stiff order} $p$ if it has order $p$ when applied to sufficiently smooth non-stiff problems. 
An exponential Runge–Kutta method is said to have {\em stiff order} $q$ if it has order $q$ when applied to stiff semilinear problems \eqref{Eq:semilienar_ode}, uniformly with respect to the stiffness of the  linear operator.
Since we are dealing with a stiff semilinear system, we expect stiff order to be a relevant property in this work.
%\david{We are missing several ETD methods here. Abhijit, please add them.} \abhi{Among the six ETD methods used in this manuscript, namely ETD2RK, ETD3RK, ETD4RK, Hochbruck--Ostermann, and ETD5RKF, two are presented here, while the remaining methods are included in Appendix A. Should we move some of the methods here?}
The exponential methods considered in this work are listed below, and their coefficients are provided in the supplementary material.

 \begin{itemize}
    \item \textbf{IF (Lawson-type) methods} \cite{lawson1967generalized, hochbruck2005explicit}:
    \begin{enumerate}
        \item \textbf{Lawson--Euler:} The one-stage, first-order method \eqref{Lawson-Euler}.
    \item \textbf{Lawson2b:} A two-stage exponential IF method of the form \eqref{coeffL}
    based on the explicit Heun method.
     This method has non-stiff order 2 and stiff order 1.
    \item \textbf{Lawson4:} A four-stage exponential method of the form \eqref{coeffL}, based on the classical explicit fourth order Runge-Kutta method. This method has non-stiff order 4 and stiff order 1.
    \end{enumerate}
    \item \textbf{ETD RK methods}. The construction of various families of ETD RK methods can be found in  \cite{cox2002exponential, hochbruck2005explicit}. Here we report the ETD RK methods of orders one through four considered in this work.
    \begin{enumerate}
        \item \textbf{N{\o}rsett-Euler\cite{hochbruck2005explicit}:} A one-stage method with a non-stiff order 1 and a stiff order 1.
        \item \textbf{ETD2RK\cite{cox2002exponential, hochbruck2005explicit}:} A 2-stage method with a nonstiff order 2 and a stiff order 2.
        \item \textbf{ETD3RK\cite{cox2002exponential}:} A 3-stage method with a nonstiff order 3 and a stiff order 2.
        \item \textbf{ETD4RK\cite{cox2002exponential}:} A 4-stage method with a nonstiff order 4 and a stiff order 2.
        \item \textbf{Hochbruck–Ostermann\cite{hochbruck2005explicit}:} A 5-stage method with a nonstiff order 4 and a stiff order 4.
        %\item \SB{We should remove from the list this method if we do not use}: \textbf{ETD5RKF \cite{cox2002exponential}:} A 6-stage method with a non-stiff order of 5 and a stiff order of 1.
    \end{enumerate}
\end{itemize}

%-----------------------%
\subsubsection{ImEx RK methods} \label{sec:ImEx}
%-----------------------%
For comparison purposes, we also consider ImEx RK methods.
These methods can be very efficient when applied to problems of the form \eqref{Eq:semilienar_ode}, by integrating the stiff linear term implicitly and the non-stiff nonlinear term explicitly.
The methods take the form
\begin{subequations}
\label{eq:ImEx_mthd}
\begin{align}
    Y_i & = y_n + \Delta t \left(\sum_{j = 1}^{i-1} \tilde{a}_{ij} N(t_{n} + c_j \Delta t, Y_j) + \sum_{j = 1}^{i} a_{ij} L Y_j \right), \ i = 1,2, \ldots, s \;, \label{Eq:ImEx_mthd_stage_type_I}\\
    y_{n+1} & = y_n + \Delta t \left( \sum_{i = 1}^{s} \tilde{b}_{i} N(t_{n} + c_i \Delta t, Y_i) + \sum_{i = 1}^{s} b_{i} L Y_i \right)  \; \label{Eq:ImEx_mthd_sol_type_I}.
\end{align}
\end{subequations}
The coefficients pertaining to a specific method
are conveniently written using a double Butcher tableau \cite{boscarino2024implicit}
\begin{equation}\label{Mthd:ImEx-RK}
    \begin{array}{c|ccc}
      \tIcex & \tIAex \\
      \hline
      & \\[-1em]
      & \tIbex^T \\
    \end{array}
    \qquad
    \begin{array}{c|ccc}
      \tIcim & \tIAim \\
      \hline
      & \\[-1em]
      & \tIbim^T \\
    \end{array} \;,
\end{equation}
% \begin{equation}\label{Mthd:ImEx-RK}
%     \begin{array}{c|ccc}
%       \bm{\tIcex} & \tIAex \\
%       \hline
%       & \\[-1em]
%       & \tIbex^T \\
%     \end{array}
%     \qquad
%     \begin{array}{c|ccc}
%       \tIcim & \tIAim \\
%       \hline
%       & \\[-1em]
%       & \tIbim^T \\
%     \end{array} \;,
%   \end{equation}
where the strictly lower-triangular matrix $\tIAex = (\tilde{a}_{ij}) \in \mathbb{R}^{s \times s}$ represents the explicit part, the lower-triangular matrix $A = (a_{ij}) \in \mathbb{R}^{s \times s}$ represents the diagonally implicit part, and the vectors $\tIcex$, $\tIbex$, $\tIcim$, and $\tIbim$ are in $\mathbb{R}^{s}$. 
The methods that we use in this work  are listed, with
some of their properties, in Table \ref{tbl:imex}.
These are taken from a variety of sources
\cite{ascher1997implicit, boscarino2024asymptotic, kennedy2003additive, kennedy2019higher,pareschi2005implicit}.

\begin{table}[!tbp]
\centering
\begin{tabular}{c|cccccc} \hline
Name & Type & $S_I$ & $S_E$ & Order & SA & FSAL \\ \hline
AGSA(3,4,2)      & I & 3 & 4 & 2 & Yes & Yes \\
%SSP2-ImEx(3,3,2) & I & 3 & 3 & 2 & Yes & No \\
%SSP3-ImEx(3,4,3) & I & 3 & 4 & 3 & No & No \\
ARS(4,4,3)      & II & 4 & 4 & 3 & Yes & Yes \\
ARK3(2)4L[2]SA  & II & 4 & 4 & 3 & Yes & No \\
ARK4(3)6L[2]SA & II & 6 & 6 & 4 & Yes & No \\
ARK4(3)7L[2]SA & II & 7 & 7 & 4 & Yes & No \\
\hline
\end{tabular}
% \caption{Properties of ImEx methods included in this work.  Here $S_I$ and $S_E$ denote the number of stages of the implicit and explicit part, respectively.  See the text for explanation of the other terms.} \label{tbl:imex}
\caption{Properties of the ImEx methods included in this work. Here $S_I$ and $S_E$ denote the number of stages of the implicit and explicit parts, respectively; SA denotes stiffly accurate and FSAL denotes first-same-as-last. See \cite{boscarino2024implicit} for further details.}
\label{tbl:imex}
\end{table}

%==============================================%
\section{Asymptotic preserving properties of exponential integrators}\label{sec:AP}
%==============================================%

Having reviewed two classes of exponential methods above,
in this section we study for each class 
whether it preserves
the asymptotic behavior of the KdVH system.  
Recall that as $\tau \to 0$ the KdVH system   formally
becomes equivalent to the KdV equation \eqref{kdv}.  Correspondingly,
here we consider various time discretizations of \eqref{Eq:kdvh_Fourier_cont}, and analyze whether they converge to a consistent discretization of \eqref{Eq:kdv_Fourier_cont} as $\tau \to 0$ 
(keeping $\Delta t$ fixed).

Throughout this work, where required we assume that the solution is sufficiently differentiable with respect to $x$, $t$, and $\tau$.

\subsection{The matrix exponential for the KdVH system}
In this section we determine important properties of the exponential of the matrix $L(\tau)$ in \eqref{Ltaudef}, which will be required in our analysis.
%Clearly, this is just the block matrix of the exponentials of $L_j$ in \eqref{Lblock}.
 
Computing the  characteristic polynomial of $L(\tau)$ 
and multiplying by $\tau^2$ we obtain
\begin{align*}
P(\lambda) = \tau^2\lambda^3 - i \xi \tau\lambda^2 + (\xi^2 \tau + 1)\lambda - i\xi^3. 
\end{align*}
For small $\tau$, the spectrum of $L(\tau)$ consists of one slow mode and two fast modes, with eigenvalues
% \begin{align}
% \lambda_{s} = i\xi^3 + \mathcal{O}(\tau)
% \end{align}
% and 
% \begin{align}\label{Climit}
% \lambda_{\pm} = i \tau^{-1}\omega_{\pm} + \frac{i\xi^3 - \xi^2 i\omega_{\pm}}{-3i \omega_{\pm}^3 + 2 \xi \omega_{\pm} + 1} + \mathcal{O}(\tau), 
% \end{align}
\begin{equation}\label{Climit}
\lambda_s=i\xi^3+\mathcal{O}(\tau),
\qquad
\lambda_{\pm}
=i\tau^{-1}\omega_{\pm}
+
\frac{i\xi^2(\omega_{\pm}-\xi)}
{3\omega_{\pm}^2-2\xi\omega_{\pm}-1}
+\mathcal{O}(\tau) \;,
\end{equation}
%\abhi{It looks like there is a typo in the formula for $\lambda_{\pm}$: the term $-3i\omega_{\pm}^3$ in the denominator should be $-3\omega_{\pm}^2$.}
respectively,
where $\omega_{\pm} = \frac{\xi \pm \sqrt{\xi^2 + 4}}{2}$.
Thus, the matrix $L(\tau)$ is diagonalizable with $3$ distinct eigenvalues.
% Note that
% \begin{equation}\label{wpwm}
% \omega_+ \omega_- = -1, \quad \omega_+ + \omega_- = \xi.
% \end{equation}
% Furthermore, from the expansion of $\lambda_{\pm}$ in \eqref{Climit},
% setting $\mu = i \omega_+/\tau$ and $\nu = i \omega_-/\tau$, we obtain,
% using \eqref{wpwm},
% \begin{equation}\label{semp}
% \mu \nu = 1/\tau^2, \quad \mu + \nu = i\xi/\tau.
% \end{equation}

The key element in our analysis of the AP property is the
behavior of the functions $\phi_k(\Delta t L(\tau))$ as $\tau \to 0$.  In what follows we explicitly compute these functions.  For $k>0$, we show in Lemmas \ref{Lemma:phi_i_fun_eval}-\ref{lem:phiproduct} that these functions eliminate the rapidly-oscillating components of the hyperbolic relaxation and preserve only the component relevant to the original KdV equation.  On the other hand, for $k=0$, we show that $\phi_k(\Delta t L(\tau))$ retains the fast oscillations when $\tau$ is small.

The Lagrange-Sylvester formula (see e.g. \cite{higham2008functions}) says that if $A$ is a
diagonalizable matrix with distinct eigenvalues and $f$ is an analytic function, then
\begin{align} \label{Eq:LS_formula}
  f(A)& =\sum _{i=1}^{n}f(\lambda _{i})~A_{i}~, &
    \textup{where} \quad \quad \quad
    A_{i}\equiv \prod_{\substack{j=1 \\ j \neq i}}^{n}{\frac {1}{\lambda _{i}-\lambda _{j}}}\left(A-\lambda _{j}I\right).
\end{align}
are the Frobenius covariants of $A$.
Applying this formula gives
\begin{align}\label{Eq:phi0_fun}
\phi_0(\Delta t L(\tau)) := e^{\Delta t L(\tau)} &= e^{\Delta t \lambda_s} P_{s} + e^{\Delta t \lambda_{+}} P_+ + e^{\Delta t \lambda_{-}} P_-,
\end{align} 
where the Frobenius covariants of $L(\tau)$ are
\begin{subequations}\label{Eq:L_cov_mat}
\begin{align}
P_s &= \frac{(L(\tau) - \lambda_{+} I)(L(\tau) - \lambda_{-} I)}{(\lambda_s - \lambda_{+})(\lambda_s - \lambda_{-})}, \label{P_s} \\
P_+ &= \frac{(L(\tau) - \lambda_s I)(L(\tau) - \lambda_{-} I)}{(\lambda_{+} - \lambda_s)(\lambda_{+} - \lambda_{-})}, \\
P_- &= \frac{(L(\tau) - \lambda_s I)(L(\tau) - \lambda_{+} I)}{(\lambda_{-} - \lambda_s)(\lambda_{-} - \lambda_{+})}.
\end{align}
\end{subequations}
We are interested in computing the product
of $\phi_0$ with certain vectors in the limit
$\tau \to 0$.  The following intermediate results will be useful in this regard.
\begin{lemma}\label{lem:covariants}
Let $P_s$, $P_+$, and $P_-$ denote the Frobenius covariants
associated with the eigenvalues $\lambda_s$, $\lambda_+$, and
$\lambda_-$ of $L(\tau)$, respectively, as defined in
\eqref{Eq:L_cov_mat}. For each fixed wavenumber $\xi$, the limits as
$\tau\to0$ are
\begin{equation}
\lim_{\tau\to0}P_s
=
\begin{bmatrix}
1 & 0 & 0\\
i\xi & 0 & 0\\
(i\xi)^2 & 0 & 0
\end{bmatrix},
\quad
\lim_{\tau\to0}P_\pm
=
\begin{bmatrix}
0 & 0 & 0\\[1mm]
-\dfrac{2i\xi}{d_\pm}
&
\dfrac{\xi^2\pm\xi s+2}{d_\pm}
&
\pm\dfrac{i}{s}\\[3mm]
\dfrac{\xi(\xi\mp s)}{d_\pm}
&
\mp\dfrac{i}{s}
&
\dfrac{2}{d_\pm}
\end{bmatrix},
\end{equation}
where $s=\sqrt{\xi^2+4}$ and $d_\pm=s(s\pm\xi)$. In particular, for
$r(\xi)=[1,i\xi,(i\xi)^2]^T$, we have
\[
\lim_{\tau\to0}P_+r(\xi)
=
\lim_{\tau\to0}P_-r(\xi)
=0.
\]
\end{lemma}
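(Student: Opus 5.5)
Since $P_s,P_+,P_-$ are the spectral projectors of $L(\tau)$, each is a rank-one matrix $P_\mu = x_\mu y_\mu^T/(y_\mu^T x_\mu)$, where $x_\mu$ and $y_\mu$ are right and left eigenvectors for the eigenvalue $\mu\in\{\lambda_s,\lambda_+,\lambda_-\}$. This representation avoids the cancellations in the quotient form \eqref{Eq:L_cov_mat}, where both numerator and denominator blow up like powers of $\tau^{-1}$. I will therefore compute eigenvectors to leading order in $\tau$, using the expansions \eqref{Climit}, and pass to the limit in the rank-one formula.

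\textbf{Slow mode.} From $L(\tau)x=\lambda x$ with $x=[x_1,x_2,x_3]^T$, the rows give
\[
-i\xi x_3=\lambda x_1,\qquad i\xi x_2-x_3=\tau\lambda x_2,\qquad -i\xi x_1+x_2=\tau\lambda x_3 .
\]
For $\lambda=\lambda_s=O(1)$ the last two rows force $x_2\to i\xi x_1$ and $x_3\to i\xi x_2=(i\xi)^2x_1$. Hence $x_s\to r(\xi)$, and the first row is consistent with $\lambda_s\to i\xi^3$.

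The left eigenvector solves $y^TL=\lambda y^T$:
\[
-\tfrac{i\xi}{\tau}y_3=\lambda y_1,\qquad \tfrac{i\xi}{\tau}y_2+\tfrac{1}{\tau}y_3=\lambda y_2,\qquad -i\xi y_1-\tfrac1\tau y_2=\lambda y_3 .
\]
With $\lambda=O(1)$ this gives $y_3=O(\tau)$ and $y_2=O(\tau)$, so after normalization $y_s\to e_1^T$. Then $P_s\to r e_1^T/(e_1^Tr)=re_1^T$, which is the claimed first-column matrix.

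\textbf{Fast modes.} Write $\lambda_\pm=i\omega_\pm/\tau+O(1)$. In the first row, $x_1=-i\xi x_3/\lambda=O(\tau)$, so $x_\pm\to[0,a,b]$. The third row then gives $a\approx i\omega_\pm b$, so $x_\pm\to[0,i\omega_\pm,1]^T$; this also satisfies the second row, by $\omega^2-\xi\omega-1=0$.

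For the left eigenvector, the second row gives $(i\xi-i\omega)y_2+y_3=0$, i.e.\ $y_3=i(\omega-\xi)y_2$. The first row gives $y_1=-i\xi y_3/(i\omega)+O(\tau)=O(1)$, and this component must be kept; the third row is then used only as a consistency check.

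Normalizing by $y^Tx$ yields the $2\times3$ lower block. The entries are then simplified using $\omega_\pm=(\xi\pm s)/2$, $\omega_+\omega_-=-1$ and $d_\pm=s(s\pm\xi)$. The first row of the limit vanishes because $x_1\to0$.

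\textbf{Final claim and main obstacle.} For $P_\pm r\to0$, it suffices that $y_\pm^T r\to0$. This should follow from $y_1+i\xi y_2-\xi^2y_3=0$ in the limit, using the relations above together with $\omega^2-\xi\omega-1=0$. An alternative, more robust route is to note that $P_\pm r=P_\pm(r-x_s)+P_\pm x_s$ with $P_\pm x_s=0$ exactly. This works if the $P_\pm$ stay bounded, which the limits show, and since $x_s-r=O(\tau)$.

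I expect the main obstacle to be obtaining the correct $O(1)$ first component $y_1$ of the fast left eigenvectors. It comes from a ratio of $O(1/\tau)$ quantities, and the fast eigenvalue's $O(1)$ correction term in \eqref{Climit} must not contaminate it. I will verify this component by checking that $P_++P_-+P_s\to I$ and $P_\pm^2=P_\pm$ in the limit.
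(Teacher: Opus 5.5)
Your proposal is correct, but it follows a different route from the paper. The paper computes no eigenvectors. It notes that $M(\tau)=\tau L(\tau)$ has the same Frobenius covariants as $L(\tau)$, with eigenvalues $\tau\lambda_s\to0$ and $\tau\lambda_\pm\to i\omega_\pm$. In the product formula \eqref{Eq:L_cov_mat} written for $M$, the denominators therefore have nonzero limits ($1$ for $P_s$, $(i\omega_\pm)(i\omega_\pm-i\omega_\mp)$ for $P_\pm$), and the numerators converge to polynomials in $M_0=\lim_{\tau\to0}M(\tau)$. A single multiplication then gives, e.g., $\lim P_s=M_0^2-i\xi M_0+I$, and $\lim P_\pm r=0$ follows from $M_0r=0$. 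So the cancellation you cite as motivation disappears after rescaling, and the quotient form is actually the shorter path.

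Your rank-one route also works. With your normalizations the eigenvector components are exact expressions in $\tau\lambda$ and $1/\lambda$, e.g.\ $y_3=(\tau\lambda_\pm-i\xi)y_2$ and $y_1=-i\xi y_3/(\tau\lambda_\pm)$. Hence the $O(1)$ correction to $\lambda_\pm$ enters only at $O(\tau)$, and the obstacle you anticipate for $y_1$ does not arise. You get $x_\pm\to[0,i\omega_\pm,1]^T$, $y_\pm\to[-i\xi/\omega_\pm^2,\,1,\,i/\omega_\pm]$ and $y_\pm^Tx_\pm\to\pm is\neq0$, which reproduces the stated matrices. You also get $y_\pm^Tr\to i\xi(\omega_\pm^2-\xi\omega_\pm-1)/\omega_\pm^2=0$.

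What your approach buys is structure. The limit $\lim P_s=re_1^T$ appears as a rank-one projector onto the equilibrium direction that reads only the $u$-component, which is exactly what Lemma~\ref{lem:phiproduct} uses. Your argument $P_\pm r=P_\pm(r-x_s)$ with $P_\pm x_s=0$ exactly is a clean proof of the last claim.

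The price is bookkeeping the paper avoids. You must verify that the normalized components are nonzero and that $y^Tx$ has a nonzero limit. You must also treat $\xi=0$ for the slow left eigenvector separately: there $y_2,y_3=O(\tau)$ must be read off the second and third equations rather than the first.
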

\begin{proof}
Let $M(\tau):=\tau L(\tau)$. 
Since $M$ is a scalar multiple of $L(\tau)$, it has the same Frobenius covariants
$P_s$, $P_+$, and $P_-$, which are associated with the scaled eigenvalues
\[
\alpha_s(\tau)=\tau\lambda_s(\tau),
\qquad
\alpha_\pm(\tau)=\tau\lambda_\pm(\tau).
\]
From the eigenvalue expansions in \cref{Climit}, we have
\[
\lim_{\tau\to0}\alpha_s(\tau) = 0,
\qquad
\lim_{\tau\to0}\alpha_\pm(\tau) = i\omega_\pm \;.
\]

Moreover, in the limit $\tau \to 0$, 
$$M(\tau)
=
\begin{bmatrix}
0 & 0 & -i\xi\tau\\
0 & i\xi & -1\\
-i\xi & 1 & 0
\end{bmatrix}
\longrightarrow
M_0:=
\begin{bmatrix}
0 & 0 & 0\\
0 & i\xi & -1\\
-i\xi & 1 & 0
\end{bmatrix}.$$

From the Frobenius-covariant formula we have
$P_s
=\frac{(M-\alpha_+I)(M-\alpha_-I)}
     {(\alpha_s-\alpha_+)(\alpha_s-\alpha_-)}$ and since the denominator converges to $(-i\omega_+)(-i\omega_-)=1$ in the limit $\tau \to 0$, the $\lim_{\tau\to0}P_s$ exists and is given by
\[
\lim_{\tau\to0}P_s
=
\frac{(M_0-i\omega_+I)(M_0-i\omega_-I)}
     {(-i\omega_+)(-i\omega_-)}
     = (M_0-i\omega_+I)(M_0-i\omega_-I).
\]
Furthermore,
using $\omega_++\omega_-=\xi$ and $\omega_+\omega_-=-1$,
\[
(M_0-i\omega_+I)(M_0-i\omega_-I)
=
M_0^2-i\xi M_0+I
=
\begin{bmatrix}
1 & 0 & 0\\
i\xi & 0 & 0\\
(i\xi)^2 & 0 & 0
\end{bmatrix} \;,
\]
where the last equality is obtained by direct calculation, which proves the stated limit for $P_s$.

For $P_\pm$, the corresponding denominator converges to
$(i\omega_\pm)(i\omega_\pm-i\omega_\mp)$, which is nonzero since
$\omega_+\omega_-=-1$ and
$\omega_+-\omega_-=s=\sqrt{\xi^2+4}>0$. Thus the limits exist, and direct calculation gives
\[
\lim_{\tau\to0}P_\pm
=
\frac{M_0\bigl(M_0-i\omega_\mp I\bigr)}
     {(i\omega_\pm)\bigl(i\omega_\pm-i\omega_\mp\bigr)} = \begin{bmatrix}
0 & 0 & 0\\[1mm]
-\dfrac{2i\xi}{d_\pm}
&
\dfrac{\xi^2\pm\xi s+2}{d_\pm}
&
\pm\dfrac{i}{s}\\[3mm]
\dfrac{\xi(\xi\mp s)}{d_\pm}
&
\mp\dfrac{i}{s}
&
\dfrac{2}{d_\pm}
\end{bmatrix} \;,
\]
where we used $\omega_+-\omega_-=s=\sqrt{\xi^2+4}$ and where $d_\pm=s(s\pm\xi)$.

Finally, observe that $M_0r(\xi)=0$. Hence,
\[
\left(\lim_{\tau\to0}P_\pm\right)r(\xi)
=
\frac{M_0\bigl(M_0-i\omega_\mp I\bigr)r(\xi)}
     {(i\omega_\pm)\bigl(i\omega_\pm-i\omega_\mp\bigr)}
=0.
\]
This completes the proof.
\end{proof}
%\color{black}

The above lemma shows that the rapidly-oscillating parts of $\phi_0$ are orthogonal to well-prepared data in the limit $\tau\to0$.  However, they are not orthogonal to the nonlinear term, so rapid oscillations may be excited in schemes (like Lawson-Euler) that use $\phi_0$.

In order to prove the AP property of exponential integrators, we need the evaluation of higher-order $\phi$-functions of the matrix. 
\begin{lemma}\label{Lemma:phi_i_fun_eval}
    The higher-order $\phi$-functions evaluated at $\Delta t L(\tau)$ via the Lagrange--Sylvester formula are given by
    \begin{align}
        \phi_k(\Delta t L(\tau)) &= \phi_k(\Delta t\lambda_s) P_{s} + \phi_k(\Delta t \lambda_{+}) P_+ + \phi_k(\Delta t \lambda_{-}) P_-, \quad \text{for } k \geq 1,
    \end{align}
    where the Frobenius covariant matrices are given in \eqref{Eq:L_cov_mat}.
\end{lemma}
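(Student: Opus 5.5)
The plan is to apply the Lagrange--Sylvester formula \eqref{Eq:LS_formula} directly with $A=\Delta t L(\tau)$ and $f=\phi_k$. Two things must be checked: that $\phi_k$ is analytic (so the formula applies), and that the Frobenius covariants of $\Delta t L(\tau)$ coincide with the matrices $P_s,P_\pm$ of \eqref{Eq:L_cov_mat}.

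\emph{Analyticity.} By the series representation \eqref{eq:phi_k}, $\phi_k(z)=\sum_{j\ge0} z^j/(j+k)!$ has infinite radius of convergence. Hence each $\phi_k$ is entire, and $\phi_k(A)$ is well defined for every square matrix $A$ through the same power series.

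\emph{Spectrum and covariants.} Since $\Delta t>0$, the eigenvalues of $\Delta t L(\tau)$ are $\Delta t\lambda_s,\Delta t\lambda_+,\Delta t\lambda_-$. These are distinct because $L(\tau)$ has three distinct eigenvalues, as noted after \eqref{Climit}. The matrix $\Delta t L(\tau)$ is therefore diagonalizable with distinct eigenvalues. For the covariants, note that in each factor the scaling cancels:
\[
\frac{\Delta t L(\tau)-\Delta t\lambda_j I}{\Delta t\lambda_i-\Delta t\lambda_j}=\frac{L(\tau)-\lambda_j I}{\lambda_i-\lambda_j}.
\]
So the Frobenius covariants of $\Delta t L(\tau)$ are exactly $P_s,P_+,P_-$, the same scaling-invariance argument used for $M=\tau L$ in the proof of Lemma~\ref{lem:covariants}. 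Substituting into \eqref{Eq:LS_formula} gives the claimed identity for every $k\ge1$, exactly parallel to \eqref{Eq:phi0_fun} for $k=0$.

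\emph{Self-contained alternative.} One could also avoid quoting \eqref{Eq:LS_formula} for general analytic $f$. The identities $P_iP_j=\delta_{ij}P_i$, $P_s+P_++P_-=I$ and $L=\sum_i\lambda_iP_i$ give $(\Delta t L)^j=\sum_i(\Delta t\lambda_i)^jP_i$. Summing the power series of $\phi_k$ term by term, which is justified by absolute convergence, yields the same result.

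There is no substantive obstacle here. The only point needing care is confirming the distinctness of the eigenvalues for the $\tau$ under consideration. This follows from \eqref{Climit} for small $\tau$, since $\lambda_\pm\sim i\omega_\pm/\tau$ with $\omega_+\neq\omega_-$ and $\lambda_s$ bounded, and that is the regime relevant to the AP analysis.
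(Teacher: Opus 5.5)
Your proof is correct and follows the paper's argument: $\phi_k$ is entire by its power series \eqref{eq:phi_k}, so the Lagrange--Sylvester formula applies directly to $\Delta t L(\tau)$. You also check explicitly that the Frobenius covariants are unchanged by the scaling by $\Delta t$ and that the eigenvalues are distinct; the paper leaves both of these steps implicit.
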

\begin{proof}
    By comparing the last formula in \eqref{eq:phi_k}
    term-by-term with the series for $\exp^{|z|}$, one
    immediately finds that $\phi_k$ is entire, so the Sylvester-Lagrange formula applies, and it gives
    the formula above.
\end{proof}

% \begin{lemma} \label{lem:phitau}
% For all $k\ge 1$, $\lim_{\tau \to 0} \phi_k(\Delta t \lambda_\pm) = 0.$ 
% \end{lemma}
\begin{lemma}\label{lem:phitau}
For each fixed wavenumber $\xi$ and fixed $\Delta t>0$,
$$\lim_{\tau\to0}\phi_k(\Delta t\lambda_\pm)=0,
\qquad k\ge1.$$
\end{lemma}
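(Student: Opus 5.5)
The plan is to use the closed form \eqref{eq:phi_k}, $\phi_k(z) = \bigl(e^z - \sum_{j=0}^{k-1} z^j/j!\bigr) z^{-k}$, with $z_\pm(\tau) := \Delta t\lambda_\pm(\tau)$. By the expansion \eqref{Climit}, $z_\pm = i\Delta t\,\omega_\pm/\tau + \Delta t\,\beta_\pm + \mathcal{O}(\tau)$, where $\beta_\pm = i\xi^2(\omega_\pm-\xi)/(3\omega_\pm^2-2\xi\omega_\pm-1)$. Two facts about $z_\pm$ are needed.

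First, $|z_\pm|\to\infty$. This holds because $\omega_\pm\neq 0$ (indeed $\omega_+\omega_-=-1$), so $|z_\pm| \ge \Delta t|\omega_\pm|/\tau - C$ for small $\tau$.

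Second, $\operatorname{Re} z_\pm$ stays bounded above as $\tau\to0$. The leading term $i\Delta t\omega_\pm/\tau$ is purely imaginary, so $\operatorname{Re} z_\pm = \Delta t\operatorname{Re}\beta_\pm + \mathcal{O}(\tau)$. Since $\omega_\pm$ and $\xi$ are real, $\beta_\pm$ is purely imaginary, and hence $\operatorname{Re} z_\pm = \mathcal{O}(\tau)$. It follows that $|e^{z_\pm}| = e^{\operatorname{Re} z_\pm}\le C$ for all sufficiently small $\tau$.

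A cleaner way to obtain the second fact, avoiding reliance on the $\mathcal{O}(\tau)$ remainder, is to note that $L(\tau)$ is skew-Hermitian up to a similarity, since the KdVH linear part is non-dissipative. Concretely, one can check that $\operatorname{diag}(1,\tau,\tau)\,L(\tau)$ is skew-Hermitian, so all eigenvalues of $L(\tau)$ are purely imaginary. (Equivalently, the eigenvalues are $i$ times the roots of a real cubic obtained from $P$ by substituting $\lambda = i\mu$; a check of the discriminant, or the expansion above, shows all three roots are real for small $\tau$.) I would try the symmetrizer argument first. If it works, then $|e^{z_\pm}|=1$ exactly.

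With both facts, I would bound
\[
|\phi_k(z_\pm)| \le |z_\pm|^{-k}\Bigl(|e^{z_\pm}| + \sum_{j=0}^{k-1}\frac{|z_\pm|^j}{j!}\Bigr).
\]
The term $|e^{z_\pm}|\,|z_\pm|^{-k}$ tends to $0$ because $|e^{z_\pm}|$ is bounded and $|z_\pm|\to\infty$. Each polynomial term contributes $|z_\pm|^{j-k}/j!$ with $j\le k-1$, which also tends to $0$. Hence $\phi_k(\Delta t\lambda_\pm)\to0$ for every $k\ge1$, and in fact $\phi_k(\Delta t\lambda_\pm) = \mathcal{O}(\tau)$, which may be useful later.

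The main obstacle is justifying the boundedness of $\operatorname{Re}(\Delta t\lambda_\pm)$ rigorously. Without it, the factor $e^{z_\pm}$ could blow up like $e^{c/\tau}$ and dominate. I would settle this either via the similarity to a skew-Hermitian matrix or via the purely imaginary character of the first two terms in \eqref{Climit}, supplemented by a justification of the remainder term, for example through the implicit function theorem applied to $P$ in the scaled variable $\alpha=\tau\lambda$ near the simple roots $i\omega_\pm$.
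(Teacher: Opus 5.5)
Your argument is correct and is essentially the paper's. Both rest on $|e^{\Delta t\lambda_\pm}|$ staying bounded while $|\Delta t\lambda_\pm|\to\infty$; you bound the closed form \eqref{eq:phi_k} term by term, whereas the paper inducts on $k$ through $\phi_{k+1}(z)=(\phi_k(z)-1/k!)/z$. Your symmetrizer observation is a worthwhile addition, since the paper only asserts that the numerator is bounded: $\operatorname{diag}(1,\tau,\tau)L(\tau)$ is skew-Hermitian, so conjugating $L(\tau)$ by $\operatorname{diag}(1,\sqrt{\tau},\sqrt{\tau})$ gives a skew-Hermitian matrix and hence $|e^{\Delta t\lambda_\pm}|=1$ exactly.
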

\begin{proof}
For $k=1$, we have
\begin{align} \label{phi1dtlam}
\phi_1(\Delta t \lambda_{\pm}) =  \frac{e^{\Delta t \lambda_{\pm}}- 1}{\lambda_{\pm} \Delta t},
\end{align}
where
\begin{align*}
\lambda_{\pm} = i \tau^{-1}\omega_{\pm} + \lambda^0_\pm +  \mathcal{O}(\tau), \ \text{with} \ \lambda^0_\pm = \frac{i\xi^2\left(\omega_{\pm} - \xi \right)}{3 \omega_{\pm}^2 - 2 \xi \omega_{\pm} - 1}
= \pm \frac{i\xi^2\left(\sqrt{\xi^2 + 4} \mp \xi\right)}{\left(\xi^2 \pm\xi\sqrt{\xi^2 + 4}  + 4\right)} \;.
\end{align*}
% where
% \[
% \lambda^0_\pm = \frac{i\xi^2\left(\omega_{\pm} - \xi \right)}{3 \omega_{\pm}^2 - 2 \xi \omega_{\pm} - 1}
% = \pm \frac{i\xi^2\left(\sqrt{\xi^2 + 4} \mp \xi\right)}{\left(\xi^2 \pm\xi\sqrt{\xi^2 + 4}  + 4\right)}
% \]
The modulus of the numerator of 
\eqref{phi1dtlam} is bounded, while the denominator is unbounded as $\tau \to 0$.

Next, assume by induction that the result holds for a given value of $k$.  We have
\begin{align*}
    \phi_{k+1} (\Delta t\lambda_\pm) & = \frac{\phi_k(\Delta t\lambda_\pm) - \frac{1}{k!}}{\Delta t\lambda_\pm}.
\end{align*}
Again, the numerator is bounded by two while the denominator grows without bound as $\tau \to 0$.
\end{proof}

Finally, we can compute the desired products.

\begin{lemma} \label{lem:phiproduct}
For each fixed wavenumber $\xi$, fixed $\Delta t>0$, and all $k \ge 1$, we have
    \begin{align}\label{Eq:Phi_k_term_lim_general}
        \lim_{\tau \to 0} \left(\phi_{k}(\Delta t L(\tau))e_1\right) 
                 & = \phi_{k}(i\xi^3 \Delta t) \begin{bmatrix}
                        1 \\
                        i \xi \\
                        (i \xi)^2
                    \end{bmatrix}  \;.
    \end{align}
\end{lemma}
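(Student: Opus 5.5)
We start from the decomposition of Lemma~\ref{Lemma:phi_i_fun_eval},
\[
\phi_k(\Delta t L(\tau))e_1 = \phi_k(\Delta t\lambda_s)P_se_1 + \phi_k(\Delta t\lambda_+)P_+e_1 + \phi_k(\Delta t\lambda_-)P_-e_1,
\]
and pass to the limit $\tau\to0$ in each of the three terms separately.

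For the slow term, $\phi_k$ is entire (as noted in the proof of Lemma~\ref{Lemma:phi_i_fun_eval}), hence continuous. By \eqref{Climit}, $\lambda_s\to i\xi^3$, so $\phi_k(\Delta t\lambda_s)\to\phi_k(i\xi^3\Delta t)$. By Lemma~\ref{lem:covariants}, $P_s$ converges to the matrix whose first column is $[1,i\xi,(i\xi)^2]^T$ and whose other entries vanish. Therefore $P_se_1\to[1,i\xi,(i\xi)^2]^T$, and the product of the two limits gives exactly the right-hand side of \eqref{Eq:Phi_k_term_lim_general}.

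For the fast terms, Lemma~\ref{lem:covariants} shows that $P_\pm$ has a finite limit, so $P_\pm e_1$ stays bounded as $\tau\to0$; the limiting first column is $[0,-2i\xi/d_\pm,\xi(\xi\mp s)/d_\pm]^T$. Lemma~\ref{lem:phitau} gives $\phi_k(\Delta t\lambda_\pm)\to0$ for $k\ge1$. A scalar tending to zero times a vector with a finite limit tends to zero, so both fast contributions vanish. Summing the three limits proves the lemma.

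There is no real obstacle, since the limits of $P_s$ and $P_\pm$ are already established. The only point needing care is that we take limits of products, which requires each factor to converge separately. This holds: $P_\pm$ converges to a finite matrix by Lemma~\ref{lem:covariants}, and the scalar factors converge by continuity of $\phi_k$ and by Lemma~\ref{lem:phitau}. The argument uses $k\ge1$ essentially. For $k=0$ the factor $e^{\Delta t\lambda_\pm}$ has modulus bounded near $1$ but does not tend to zero, and $P_\pm e_1\not\to0$ whenever $\xi\neq0$, which is the source of the non-AP behaviour of Lawson methods.
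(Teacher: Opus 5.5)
Your proposal is correct and follows essentially the same route as the paper: you split $\phi_k(\Delta t L(\tau))e_1$ via the Lagrange--Sylvester decomposition, then take limits factor by factor. The fast terms vanish by Lemma~\ref{lem:phitau} together with the finite limits of $P_\pm$, and the slow term converges by continuity of $\phi_k$ and the limit of $P_s e_1$ from Lemma~\ref{lem:covariants}.
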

\begin{proof}
We have $\lim_{\tau \to 0} \left(\phi_{k}(\Delta t L(\tau))e_1\right)$  
    \begin{align*} 
     & \quad = \lim_{\tau \to 0} \left(\phi_{k}(\Delta t \lambda_s) P_{s}e_1\right)
      + \lim_{\tau \to 0} \left(\phi_{k}(\Delta t \lambda_{+}) P_{+} e_1\right)
      + \lim_{\tau \to 0} \left(\phi_{k}(\Delta t \lambda_{-}) P_{-} e_1\right) \\
       & \quad = \left( \lim_{\tau \to 0} \phi_{k}(\Delta t \lambda_s) \cdot \lim_{\tau \to 0} P_{s}
      + \lim_{\tau \to 0} \phi_{k}(\Delta t \lambda_{+}) \cdot \lim_{\tau \to 0} P_{+}
      + \lim_{\tau \to 0} \phi_{k}(\Delta t \lambda_{-}) \cdot \lim_{\tau \to 0} P_{-}\right)e_1\;.
    \end{align*}
    The second and third terms vanish due to Lemma \ref{lem:phitau}.
    For the first term, we directly compute
    \begin{align*}
    \lim_{\tau \to 0} \left(\phi_{k}(\Delta t \lambda_s) P_{s}e_1\right) & = \lim_{\tau \to 0} \left(\phi_{k}(\Delta t \lambda_s) \right) \cdot \lim_{\tau \to 0} (P_{s}e_1) 
    = \phi_{k}(i\xi^3 \Delta t) \begin{bmatrix}
                        1 \\
                        i \xi \\
                        (i \xi)^2
                    \end{bmatrix}  \;.
    \end{align*}
\end{proof}
%-------------------------------------%
\subsection{Well-prepared initial data}
%-------------------------------------%
Whereas the KdV equation requires an initial condition for only one function $\eta(x,0)$, the KdVH system requires initial conditions for three functions $u \approx \eta$, $v \approx \eta_x$, and $w \approx \eta_{xx}$. It is important to consider whether the initial data are chosen in a way that is consistent with the relaxation limit $\tau \to 0$.
We say that the initial data for \eqref{Eq:kdvH} are {\em consistent} (or \emph{well-prepared to order zero)} if, as $\tau \to 0$, they satisfy
\begin{equation}\label{consistent}
v(x,0)=\partial_x u(x,0), \quad  w(x,0)=\partial_x v(x,0),
\end{equation}
 i.e. if $v^\tau(x,0)=\partial_x u^\tau(x,0)+O(\tau)$ and  $w^\tau(x,0)=\partial_x v^\tau(x,0)+O(\tau)$.
 This concept can be generalized to higher order in $\tau$ (see \cite{boscarino2024implicit}, Section 2.1.1). In Appendix~\ref{sec:well_prepared_order_1} we show how to impose well-prepared initial condition up to order 1 in $\tau$ for system \eqref{Eq:kdvH}.

For the classical hyperbolic relaxation approach of Jin \& Xin, 
the notion of AP does not require the initial condition to be well prepared. This is because, in that setting, the system dynamics are dissipative, so the relaxed manifold is a dynamical attractor \cite{boscarino2024asymptotic}.  If the initial condition is not well-prepared, an initial layer will appear, but will decay exponentially on a time scale $O(\tau)$. 
Our system \eqref{Eq:kdvH} is not dissipative, so that if the initial condition is not well-prepared the solution of the system will not converge to the solution of 
KdV equation.
Accordingly, the definition of AP for system \eqref{Eq:kdvh_Fourier_cont} reads as follows.
\begin{defn}
Consider the system \eqref{Eq:kdvh_Fourier_cont} with well-prepared initial data.
A consistent time discretization method is asymptotic preserving (AP)
if, as $\tau\to0$ with $\Delta t$ fixed, it reduces to a consistent
time discretization of \eqref{Eq:kdv_Fourier_cont}, with the auxiliary
components satisfying the equilibrium relations in the limit.
\end{defn}
Note that this definition in general does not imply that the scheme preserves the order of
accuracy in time in the stiff limit $\tau \to 0$. In the latter case the scheme is said to be
{\em asymptotically accurate} (AA) \cite{boscarino2024implicit}. Accordingly, the definition of AA for system \eqref{Eq:kdvh_Fourier_cont} reads as follows.
\begin{defn}
An exponential Runge-Kutta method of order $p$ applied to the KdVH system
\eqref{Eq:kdvh_Fourier_cont} with well-prepared initial data,
is asymptotically accurate (AA) if it converges to an exponential scheme of the same order applied to the limit KdV equation
\eqref{Eq:kdv_Fourier_cont}.
\end{defn}

% Accordingly, the definition of AP for system \eqref{Eq:kdvH} reads as follows.
% \begin{defn}%{\bf Asymptotic preserving} (\cite{jin2022asymptotic, jin1995relaxation, Jin2012}).
% Consider the system \eqref{Eq:kdvH} with well-prepared initial data.
% A consistent time discretization method
% is asymptotic preserving (AP) for this system if in the limit $\tau\to 0$ it becomes 
% (independently of the step size $\Delta t$) a consistent time discretization method for the reduced system \eqref{Eq:kdv_Fourier}.
% \end{defn}
% Note that this definition in general does not imply that the scheme preserves the order of
% accuracy in time in the stiff limit $\tau \to 0$. In the latter case the scheme is said to be
% {\em asymptotically accurate} (AA) \cite{boscarino2024implicit}. Accordingly, the definition of AA for system \eqref{Eq:kdvH} reads as follows.
% \begin{defn}
% An exponential Runge-Kutta method of order $p$ applied to the KdVH system 
%  \eqref{Eq:kdvh_Fourier_cont} with well-prepared initial data, 
% is asymptotically accurate (AA) if it converges to an exponential scheme of the same order applied to the limit KdV equation 
% \eqref{kdv}.
% \end{defn}

Furthermore, a numerical scheme is said to be \emph{uniformly accurate} (UA) if its order of accuracy is maintained uniformly for all values of the parameter $\tau$.
If $\tau$ is very small, it is usually sufficient to consider just consistent initial data. However, for a method which is of order $p$ in time and is AA, there could be a considerable degradation of accuracy if $\Delta t \gg \tau \gg \Delta t^p$. In such a case the accuracy can be drastically improved by adopting higher order well-prepared initial data.
In Figure \ref{fig:well-prepared}, we illustrate the effect of using data that is or is not well prepared in the context of a single solitary wave of speed $1.2$ propagated up to time $t=0.01$, with $\tau=10^{-4}$.  In the first subfigure, the data are well prepared to high accuracy, which is achieved by using Petviashvili's method as described later.  In the second subfigure, we simply set $v=w=0$ initially.  This leads to the generation of spurious oscillations, associated with the other modes introduced by hyperbolization.  In the third subfigure, we take 
consistent initial data, which is %approximately (but not precisely) 
well-prepared to order zero,
by setting $v=u_x$, $w=u_{xx}$ initially.  In this case, the oscillations that are generated are smaller, by approximately a factor of $\tau$.
In the last panel we adopt an initial condition which is well-prepared up to first order (see Appendix~\ref{sec:well_prepared_order_1}), which shows much smaller spurious oscillations.
 \begin{figure}
     \centering
     \begin{subfigure}{0.43\textwidth}
         \centering
         \includegraphics[width=\textwidth]{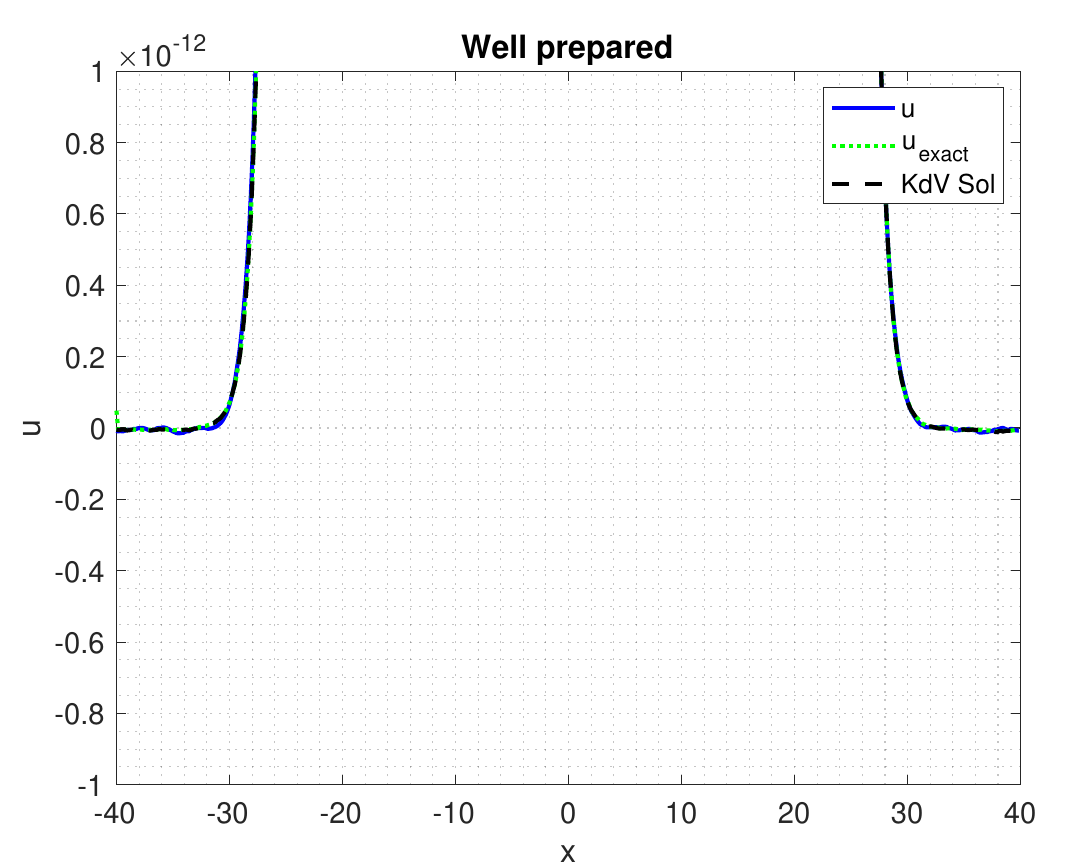}
     \end{subfigure}
     \hspace{0.00\textwidth} % Horizontal space
     \begin{subfigure}{0.43\textwidth}
         \centering
         \includegraphics[width=\textwidth]{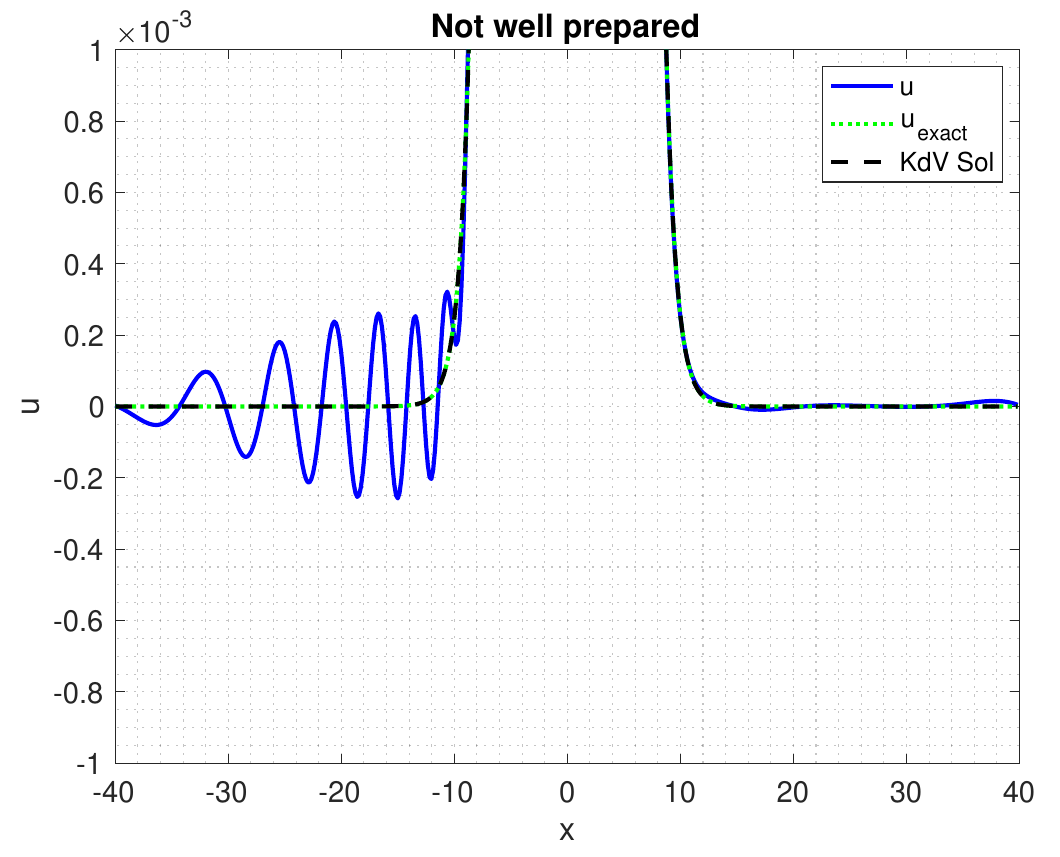}
     \end{subfigure}
%     \vspace{0.4cm} % Vertical spacing between rows
%     % Second Row
     \begin{subfigure}{0.43\textwidth}
         \centering
         \includegraphics[width=\textwidth]{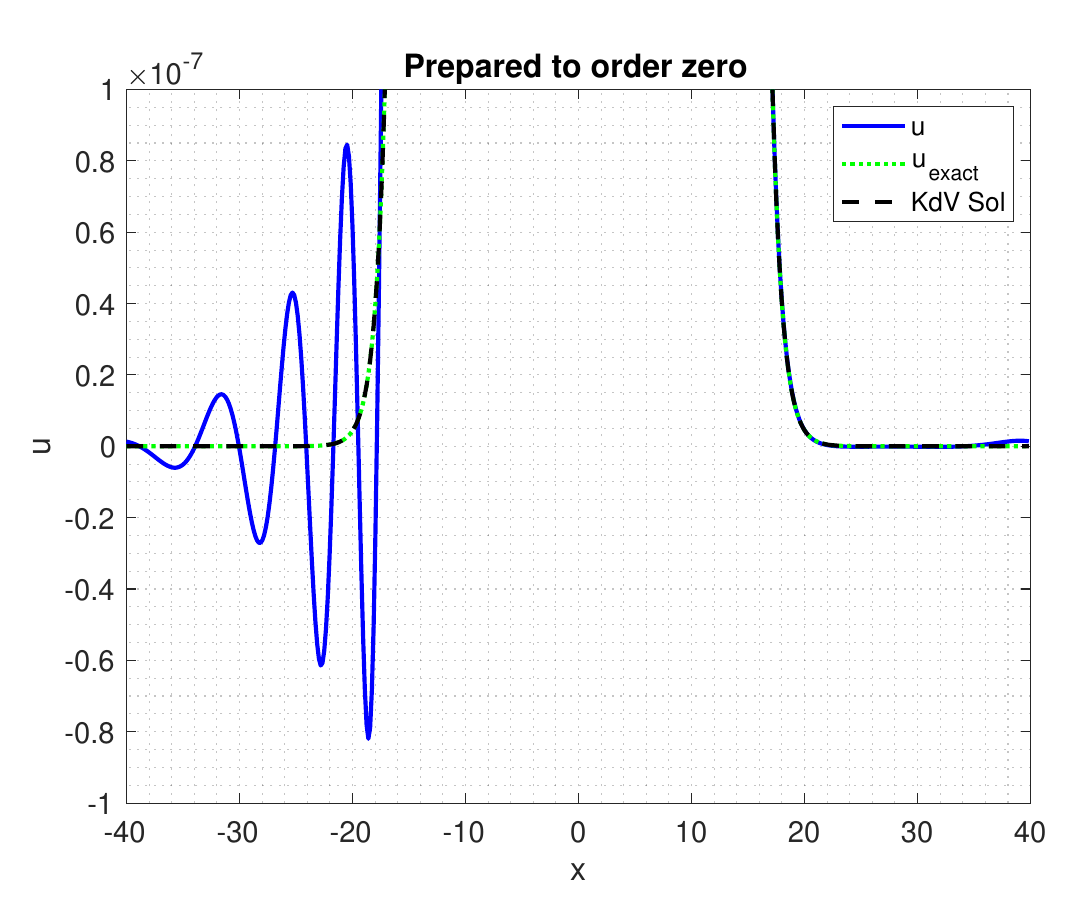}
     \end{subfigure}
     \begin{subfigure}{0.43\textwidth}
         \centering
         \includegraphics[width=\textwidth]{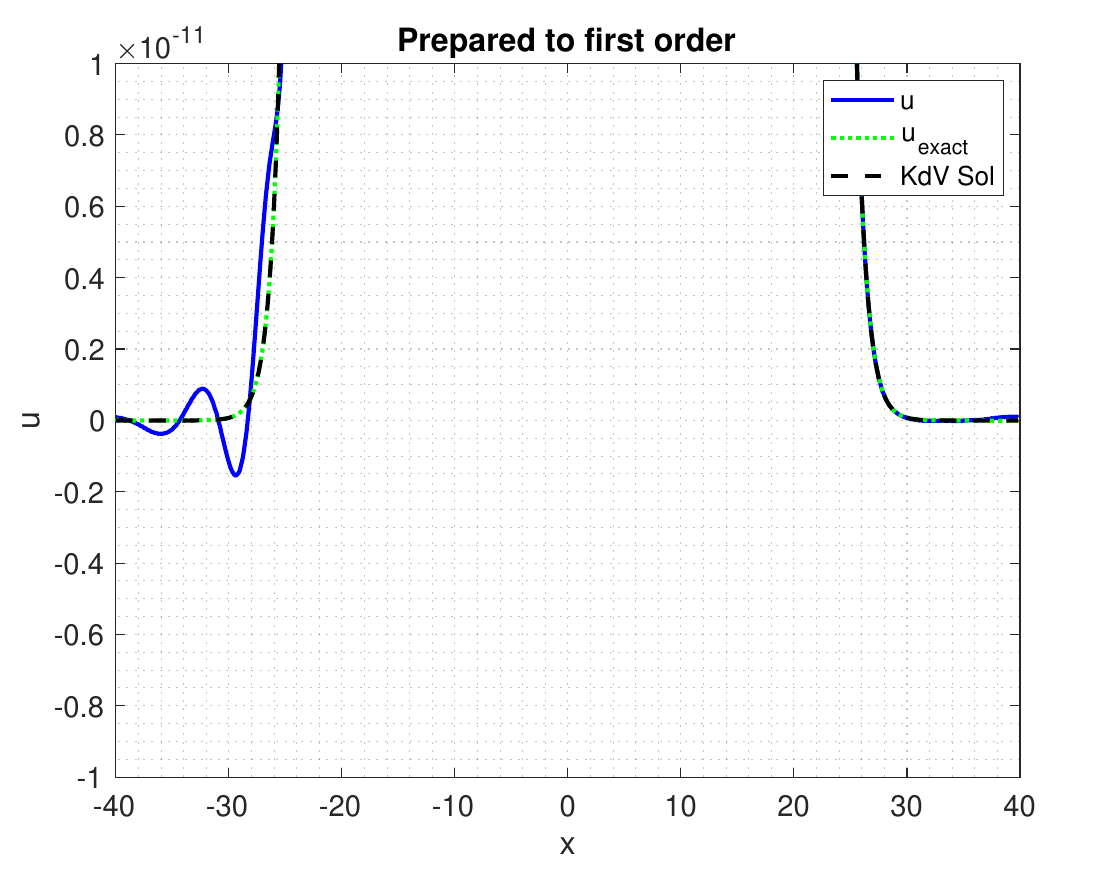}
     \end{subfigure}
     \hspace{0.05\textwidth} % Horizontal space
     \caption{Effect of well-prepared and not well-prepared initial data on the propagation of a solitary wave up to $t=0.01$ with $\tau=10^{-4}$. Here, $u$ denotes the computed $u$-component of the KdVH solution, $u_{\mathrm{exact}}$ denotes the corresponding reference traveling-wave solution obtained using Petviashvili's method, and $\mathrm{KdV \ Sol}$ denotes the numerical solution of the  KdV equation. Note the different vertical scales.}
     \label{fig:well-prepared}
 \end{figure}
%-----------------------------------------------%
\subsection{Analysis of asymptotic preservation}
%----------------------------------------------%
We are finally ready to state and prove the main results regarding the asymptotic preserving (AP) property for exponential discretizations of the KdVH system. Specifically, in this section, we prove that Lawson exponential methods are generally not AP, whereas ETD exponential methods are AP.

We begin by observing that a necessary condition for the AP property for the  exponential integrator applied to system \eqref{Eq:kdvH} is that the initial data are consistent. Hence, when $\tau \to 0$ we expect that the exponential scheme at each step $n$, solves  the KdV equation \eqref{kdv} for the $u$-component and projects the data $v$ and $w$ into the equilibrium manifold, namely, $v^n=\partial_x u^n$ and  $w^n=\partial_x v^n$. Therefore, in the following Theorems, we assume that the initial data $\qhat_0 = [\uh_0, \vh_0, \wh_0]^\top$, are consistent, namely, in the Fourier space, the consistent initial data \eqref{consistent} can be rewritten as

\begin{align}\label{Eq:well_prep}
    \hat{v}(\xi,0) = i\xi \hat{u}(\xi,0) \quad \text{and} \quad \hat{w}(\xi,0) = (i \xi)^2 \hat{u}(\xi,0).
\end{align}

Under this assumption, first we prove that the 1st order Lawson-Euler exponential integrator is not AP.
\begin{thm}\label{Th:AP_Lawson_Euler}
    The Lawson–Euler method applied to the system \eqref{Eq:kdvh_Fourier_cont} with consistent initial data \eqref{Eq:well_prep} 
    is not AP. %asymptotic-preserving for the $u$-component, but not AP for the $v$- and $w$-components.
\end{thm}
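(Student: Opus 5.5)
We take one Lawson--Euler step from consistent data $\qhat_0 = \uh_0\, r(\xi)$, with $r(\xi)=[1,i\xi,(i\xi)^2]^T$, and compute the limit $\tau\to0$ of $\qhat_1$ with $\Delta t$ fixed. The step reads
\[
\qhat_1 = e^{\Delta t L(\tau)}\bigl(\uh_0 r(\xi) + \Delta t\, \hat N_0\, e_1\bigr),
\qquad \hat N_0 = -\tfrac{i\xi}{2}\mathcal{F}\bigl((\mathcal{F}^{-1}\uh_0)^2\bigr),
\]
since the nonlinear term only has a first component. We expand $e^{\Delta t L(\tau)}$ by \eqref{Eq:phi0_fun} as $e^{\Delta t\lambda_s}P_s + e^{\Delta t\lambda_+}P_+ + e^{\Delta t\lambda_-}P_-$ and treat the two pieces separately.

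For the first piece, Lemma~\ref{lem:covariants} gives $P_\pm r(\xi)\to0$. Since $|e^{\Delta t\lambda_\pm}|$ stays bounded (the leading parts $i\omega_\pm/\tau$ are purely imaginary and the $O(1)$ corrections are bounded), these contributions vanish. Also $e^{\Delta t\lambda_s}P_s r\to e^{i\xi^3\Delta t}r$. So the first piece tends to $e^{i\xi^3\Delta t}\uh_0\, r(\xi)$, which is consistent.

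For the second piece, $e^{\Delta t\lambda_s}P_s e_1 \to e^{i\xi^3\Delta t} r(\xi)$. However, by Lemma~\ref{lem:covariants}, $P_\pm e_1$ tends to
\[
p_\pm:=\Bigl[0,\ -\tfrac{2i\xi}{d_\pm},\ \tfrac{\xi(\xi\mp s)}{d_\pm}\Bigr]^T,
\]
which is nonzero whenever $\xi\neq0$. Hence
\[
\qhat_1 = e^{i\xi^3\Delta t}\bigl(\uh_0+\Delta t\hat N_0\bigr) r(\xi) + \Delta t\hat N_0\bigl(e^{\Delta t\lambda_+}p_+ + e^{\Delta t\lambda_-}p_-\bigr) + o(1).
\]
The $u$-component converges to Lawson--Euler for KdV, because $p_\pm$ has zero first entry. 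The $v$- and $w$-components, however, carry the extra term $\Delta t\hat N_0(e^{\Delta t\lambda_+}p_+ + e^{\Delta t\lambda_-}p_-)$, so the equilibrium relations $\hat v_1 = i\xi\hat u_1$ and $\hat w_1=(i\xi)^2\hat u_1$ fail in the limit.

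The main obstacle is that $e^{\Delta t\lambda_\pm}\approx e^{i\Delta t\omega_\pm/\tau}$ has no limit: it oscillates on the unit circle. So we must show the defect is not $o(1)$, rather than compute a limit. We do this by bounding the defect's $v$-component from below along a sequence. Using $|e^{\Delta t\lambda_\pm}|\to e^{\Delta t\,\mathrm{Re}\lambda^0_\pm}=1$ (here $\lambda^0_\pm$ is purely imaginary), we get
\[
|e^{\Delta t\lambda_+}(p_+)_2 + e^{\Delta t\lambda_-}(p_-)_2| \ge \bigl|\,|(p_+)_2|-|(p_-)_2|\,\bigr| - o(1).
\]
This is positive since $d_+\neq d_-$ for $\xi\neq0$. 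If that inequality degenerates, we instead use the fact that the phases $\Delta t(\omega_+-\omega_-)/\tau = \Delta t s/\tau$ sweep all values, and choose a sequence $\tau_k\to0$ along which the phases align. We then need a concrete example with $\xi\neq0$ and $\hat N_0(\xi)\neq0$, for instance a single-Fourier-mode plus constant initial datum, which exists trivially. This shows $\hat v_1-i\xi\hat u_1\not\to0$, so the scheme is not AP.
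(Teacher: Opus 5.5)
Your proof is correct and follows the paper's argument: you take one step from well-prepared data, split $e^{\Delta t L(\tau)}$ into its Frobenius covariants, and use Lemma~\ref{lem:covariants} to isolate the $v$-defect $\Delta t\,\hat N_0\bigl(e^{\Delta t\lambda_+}(p_+)_2+e^{\Delta t\lambda_-}(p_-)_2\bigr)$, which is exactly the paper's $\Delta t\,\mu_1(\tau)$. Your reverse-triangle step also makes rigorous what the paper only asserts informally (that $\mu_1,\mu_2$ ``retain nonvanishing oscillatory contributions''): since $|e^{\Delta t\lambda_\pm}|\to1$ and $|1/d_+-1/d_-|=|\xi|/(2s)$, you get $\liminf_{\tau\to0}|\hat v_1-i\xi\hat u_1|\ge\Delta t\,|\hat N_0(\xi)|\,\xi^2/s>0$ whenever $\xi\neq0$ and $\hat N_0(\xi)\neq0$, so the defect stays bounded away from zero and your phase-alignment fallback is never needed.
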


\begin{proof}
Let $\qhat^n=[\uh^n,\vh^n,\wh^n]^\top$ denote the approximation of $\qhat(\xi,t_n)$ in Fourier space. The Lawson--Euler method advances the solution according to
\begin{align}
    \qhat^{n+1}
    =
    e^{\Delta t L(\tau)}\qhat^{n}
    +
    \Delta t\,\phi_{0}(\Delta t L(\tau))N(\qhat^{n}) \;.
\end{align}
We show that the method is not AP by considering a single step from $t_0$ to $t_1$. Let the initial data be well prepared,
$\qhat^0=[\uh^0,\vh^0,\wh^0]^\top$, with
$\vh^0=i\xi\uh^0$ and $\wh^0=(i\xi)^2\uh^0$.
A necessary condition for asymptotic preservation is that, for fixed $\Delta t$,
$\vh^1-i\xi\uh^1\to0$ and
$\wh^1-(i\xi)^2\uh^1\to0$ as $\tau\to0$.

We first consider the linear contribution. Using the spectral decomposition of $L(\tau)$,
\begin{align}
    e^{\Delta tL(\tau)}\qhat^0
    &=
    e^{\Delta t\lambda_s}P_s\qhat^0
    +e^{\Delta t\lambda_+}P_+\qhat^0
    +e^{\Delta t\lambda_-}P_-\qhat^0 .
\end{align}
By Lemma~\ref{lem:covariants}, for well-prepared initial data,
$P_\pm\qhat^0\to0$ as $\tau\to0$, whereas
$e^{\Delta t\lambda_\pm}$ remain uniformly bounded. Hence
\begin{align}\label{Eq:LE_1st_term_lim}
    e^{\Delta tL(\tau)}\qhat^0
    =
    e^{i\xi^3\Delta t}
    \begin{bmatrix}
        \uh^0\\
        i\xi\uh^0\\
        (i\xi)^2\uh^0
    \end{bmatrix}
    +o(1),
    \qquad \tau\to0 .
\end{align}

We next consider the nonlinear contribution. Writing $N^0:=N(\xi,\uh^0)$ and using the Lemma~\ref{lem:covariants} we get
\begin{align*}
\phi_0(\Delta tL(\tau))N^0
&=
\left(
e^{i\xi^3\Delta t}
\begin{bmatrix}
1\\
i\xi\\
(i\xi)^2
\end{bmatrix}
+
e^{\Delta t\lambda_+}
\begin{bmatrix}
0\\
-\dfrac{2i\xi}{d_{+}}\\
\dfrac{\xi(\xi-s)}{d_{+}}
\end{bmatrix}
+
e^{\Delta t\lambda_-}
\begin{bmatrix}
0\\
-\dfrac{2i\xi}{d_{-}}\\
\dfrac{\xi(\xi+s)}{d_{-}}
\end{bmatrix}
\right)N^0
+\order(\tau).
\end{align*}
Therefore,
\begin{align}\label{Eq:LE_nonlinear_expansion}
\phi_0(\Delta tL(\tau))N(\qhat^0)
=
\begin{bmatrix}
e^{i\xi^3\Delta t}N^0\\
i\xi e^{i\xi^3\Delta t}N^0+\mu_1(\tau)\\
(i\xi)^2e^{i\xi^3\Delta t}N^0+\mu_2(\tau)
\end{bmatrix}
+\order(\tau),
\end{align}
where
\begin{align}
\mu_1(\tau)
=
\left(
-\frac{2i\xi}{d_{+}}e^{\Delta t\lambda_+}
-\frac{2i\xi}{d_{-}}e^{\Delta t\lambda_-}
\right)N^0,  \ 
\mu_2(\tau)
=
\left(
\frac{\xi(\xi-s)}{d_{+}}e^{\Delta t\lambda_+}
+\frac{\xi(\xi+s)}{d_{-}}e^{\Delta t\lambda_-}
\right)N^0 .
\end{align}
From the asymptotic expansion
\[
    \lambda_\pm
    =
    \frac{i\omega_\pm}{\tau}
    +\lambda_\pm^0
    +\order(\tau),
    \qquad
    \lambda_\pm^0
    =
    \frac{i\xi^2(\omega_\pm-\xi)}
         {3\omega_\pm^2-2\xi\omega_\pm-1},
    \qquad
    \omega_\pm
    =
    \frac{\xi\pm\sqrt{\xi^2+4}}{2},
\]
we have $e^{\Delta t\lambda_\pm} =e^{i\omega_\pm\Delta t/\tau}e^{\Delta t\lambda_\pm^0}
(1+\order(\tau))$. Thus $e^{\Delta t\lambda_\pm}=\order(1)$ and consequently $\mu_1(\tau),\mu_2(\tau)=\order(1)$ as $\tau\to0$.
Moreover, in general, $\mu_1(\tau)$ and $\mu_2(\tau)$ retain
nonvanishing rapidly oscillatory contributions and therefore do not
converge as $\tau\to0$.

Combining \eqref{Eq:LE_1st_term_lim} and
\eqref{Eq:LE_nonlinear_expansion}, we obtain
\begin{align}
    \uh^1
    &=
    e^{i\xi^3\Delta t}
    \left(\uh^0+\Delta t\,N^0\right)
    +o(1), \label{Eq:LE_u_one_step}\\
    \vh^1-i\xi\uh^1
    &=
    \Delta t\,\mu_1(\tau)+o(1), \label{Eq:LE_v_defect}\\
    \wh^1-(i\xi)^2\uh^1
    &=
    \Delta t\,\mu_2(\tau)+o(1). \label{Eq:LE_w_defect}
\end{align}
Hence, although the $u$-component has the correct limiting
Lawson--Euler update after one time step, the defects in the $v$- and $w$-components do not, in general, tend to zero as $\tau\to0$.
Thus the numerical solution does not approach the equilibrium manifold after one time step, and the Lawson--Euler method is not asymptotic preserving.
\end{proof}

 The above argument can be extended to higher-order Lawson methods: their nonlinear stage and update terms also involve only $\phi_0$-functions, which retain the rapidly oscillating modes as $\tau\to0$, and therefore, in general, the auxiliary variables fail to satisfy the limiting equilibrium relations. In the numerical section, we observe that in general, these schemes capture the correct reduced dynamics for $u$ component, even for long time behavior. In this sense, the method appears to be asymptotic preserving with respect to $u$. In contrast, for the auxiliary  variables $v$ and $w$, the scheme fails to satisfy the equilibrium relations. In order to understand this phenomenon, it requires a more detailed analysis, which goes beyond the scope of the present AP study for the exponential integrators. This issue will be investigated in a future work.   

\begin{thm}\label{Th:AP_Norsett_Euler}
    The Norsett–Euler method applied to the system \eqref{Eq:kdvh_Fourier_cont} with consistent initial data \eqref{Eq:well_prep} 
    is AP for all the components.
\end{thm}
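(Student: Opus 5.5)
We follow the structure of the proof of Theorem~\ref{Th:AP_Lawson_Euler}, but exploit that the nonlinear term now enters only through $\phi_1$. Write one step of N{\o}rsett--Euler as
\[
\qhat^{n+1}=\phi_0(\Delta t L(\tau))\qhat^n+\Delta t\,\phi_1(\Delta t L(\tau))N(\qhat^n),
\]
and note that $N(\qhat^n)=N^n e_1$ with $N^n=-\frac{i\xi}{2}\mathcal{F}\big((\mathcal{F}^{-1}\uh^n)^2\big)$, which depends only on $\uh^n$. The strategy is to show, by induction on $n$, that if $\qhat^n$ lies (in the limit $\tau\to0$) on the equilibrium manifold $\qhat^n\to\uh^n_0\, r(\xi)$ with $r(\xi)=[1,i\xi,(i\xi)^2]^T$, then so does $\qhat^{n+1}$, with $\uh^{n+1}_0$ given by the ETD--Euler step for \eqref{Eq:kdv_Fourier_cont}.

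For the linear part, we use \eqref{Eq:phi0_fun}: $e^{\Delta tL}\qhat^n=e^{\Delta t\lambda_s}P_s\qhat^n+e^{\Delta t\lambda_\pm}P_\pm\qhat^n$. Since $|e^{\Delta t\lambda_\pm}|$ is bounded (the real part of $\lambda_\pm$ is $\order(\tau)$, or zero, as $\lambda^0_\pm$ is purely imaginary), and $P_\pm\qhat^n\to \uh^n_0\lim P_\pm r(\xi)=0$ by Lemma~\ref{lem:covariants}, the fast parts vanish. Meanwhile $P_s\qhat^n\to \uh_0^n\,r(\xi)$, since $\lim P_s$ has only a nonzero first column equal to $r(\xi)$. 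Hence $e^{\Delta tL}\qhat^n\to e^{i\xi^3\Delta t}\uh^n_0\,r(\xi)$, exactly as in \eqref{Eq:LE_1st_term_lim}. For the nonlinear part, Lemma~\ref{lem:phiproduct} with $k=1$ gives directly
\[
\phi_1(\Delta tL(\tau))e_1\,N^n\to \phi_1(i\xi^3\Delta t)\,N^n_0\, r(\xi),
\]
where $N^n_0$ is the limit of $N^n$, which exists by continuity of the pseudospectral nonlinearity in $\uh^n$. Adding the two contributions yields
\[
\qhat^{n+1}\to\Big(e^{i\xi^3\Delta t}\uh^n_0+\Delta t\,\phi_1(i\xi^3\Delta t)N^n_0\Big)r(\xi).
\]
Thus $\uh^{n+1}$ converges to the N{\o}rsett--Euler step for the KdV system \eqref{Eq:kdv_Fourier_cont}, with $\tilde L$ entry $-(i\xi)^3=i\xi^3$. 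This limit is consistent, since N{\o}rsett--Euler is a consistent exponential method. Furthermore, $\vh^{n+1}-i\xi\uh^{n+1}\to0$ and $\wh^{n+1}-(i\xi)^2\uh^{n+1}\to0$. The base case is exactly the consistent data \eqref{Eq:well_prep}, and the induction closes for any fixed number of steps.

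The main subtlety is the induction step for $n\ge1$: the data $\qhat^n$ are only \emph{asymptotically} on the manifold, $\qhat^n=\uh^n_0 r+o(1)$, rather than exactly well prepared. We must therefore ensure that the $o(1)$ defect is not amplified. This follows because $e^{\Delta t\lambda_\pm}$ and the covariants $P_s,P_\pm$ are uniformly bounded as $\tau\to0$ (their limits exist by Lemma~\ref{lem:covariants}), so $e^{\Delta tL(\tau)}$ is uniformly bounded and maps $o(1)$ to $o(1)$. Similarly, $\phi_1(\Delta tL)$ is bounded by Lemma~\ref{lem:phitau}, and the finite-dimensional nonlinearity is locally Lipschitz. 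We apply all of this componentwise for each wavenumber $\xi_j$ in the discrete setting \eqref{semidiscretization}.
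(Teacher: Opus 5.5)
Your proof is correct and takes the same route as the paper. You split the N{\o}rsett--Euler step into the $\phi_0$ part, where the fast covariants annihilate well-prepared data by Lemma~\ref{lem:covariants}, and the $\phi_1$ part, handled by Lemma~\ref{lem:phiproduct} with $k=1$, and you then read off the limiting KdV update and the equilibrium relations. Your uniform-boundedness argument for the $o(1)$ defect at steps $n\ge1$ adds rigor that the paper lacks: the paper simply assumes, ``without loss of generality,'' that $\hat{q}^n$ lies exactly on the equilibrium manifold, even though $\hat{q}^n$ depends on $\tau$ and $e^{\Delta t L(\tau)}$ itself does not converge as $\tau\to0$.
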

\begin{proof}
Let $\qhat^n = [\uh^n, \vh^n, \wh^n]^T$ denote the approximate solution of $\qhat(\xi,t)$ at time $t=t_n$ in the Fourier space. By the well-prepared initial data assumption, we may, without loss of generality, assume that at time $t^n$, the numerical solution lies on the equilibrium manifold $\qhat^n = [\uh^n,\, i\xi\uh^n,\, (i\xi)^2\uh^n]^T$.
In order to establish the AP property, we show that, in the limit $\tau \to 0$, the Norsett–Euler method produces at the next time  $t^{n+1}$ a numerical solution $\uh^{n+1}$ consistent with the limit KdV equation, while  preserving the equilibrium manifold for the $v$ and $w$ component, i.e., we prove $\vh^{n+1} = i\xi\uh^{n+1}$,  $\wh^{n+1} =  (i\xi)^2\uh^{n+1}$.

The Norsett–Euler method computes the solution from a time step $t_n$ to $t_{n+1}$ as 
    \begin{align}
        \qhat^{n+1} = e^{\Delta t L(\tau)}\qhat^{n} + \Delta t \phi_{1}(\Delta t L(\tau))N(\qhat^{n}) \;.
    \end{align}
    The limiting discretization as $\tau \to 0$ is given by
    \begin{align}\label{Eq:NE_mthd_limit}
        \lim_{\tau \to 0} \qhat^{n+1} = \lim_{\tau \to 0} \left(e^{\Delta t L(\tau)}\qhat^{n} \right) + \Delta t \lim_{\tau \to 0} \left(\phi_{1}(\Delta t L(\tau))N(\qhat^{n})\right) \;.
    \end{align}
    As before, with the well-prepared initial data the first term becomes
    \begin{align}\label{Eq:NE_1st_term_lim}
        \lim_{\tau \to 0} \left(e^{\Delta t L(\tau)} \qhat^{n} \right) & =  
        e^{i \xi^3 \Delta t}\begin{bmatrix}
                                \uh^n \\
                                i \xi \uh^n \\
                                (i \xi)^2 \uh^n
                            \end{bmatrix}.
    \end{align}
    According to Lemma \ref{lem:phiproduct}, for the second term we obtain
    \begin{align}\label{Eq:NE_2nd_term_lim}
        \lim_{\tau \to 0} \left(\phi_{1}(\Delta t L(\tau))N(\qhat^{n})\right)  
                & = \begin{bmatrix}
                        \left(\frac{e^{i \xi^3 \Delta t}-1}{i \xi^3 \Delta t}\right) N(\xi,\uh^n) \\
                        i \xi \left(\frac{e^{i \xi^3 \Delta t}-1}{i \xi^3 \Delta t}\right) N(\xi,\uh^n)\\
                        (i \xi)^2 \left(\frac{e^{i \xi^3 \Delta t}-1}{i \xi^3 \Delta t}\right) N(\xi,\uh^n)
                    \end{bmatrix} \;.
    \end{align}  
    By inserting \eqref{Eq:NE_1st_term_lim} and \eqref{Eq:NE_2nd_term_lim} into \eqref{Eq:NE_mthd_limit}, we obtain the component-wise updates
    \begin{align}
        \uh^{n+1} &= e^{i \xi^3 \Delta t}\uh^n  
                    + \Delta t\, \left(\frac{e^{i \xi^3 \Delta t}-1}{i \xi^3 \Delta t}\right) N(\xi,\uh^n) \;, \label{Eq:NE_mthd_lim_comp_u} \\ 
        \vh^{n+1} &= i \xi e^{i \xi^3 \Delta t}\uh^n  
                    + i \xi \, \Delta t\, \left(\frac{e^{i \xi^3 \Delta t}-1}{i \xi^3 \Delta t}\right) N(\xi,\uh^n) \;, \label{Eq:NE_mthd_lim_comp_v} \\
        \wh^{n+1} &= (i \xi)^2 e^{i \xi^3 \Delta t}\uh^n  
                    + (i \xi)^2 \, \Delta t\, \left(\frac{e^{i \xi^3 \Delta t}-1}{i \xi^3 \Delta t}\right) N(\xi,\uh^n) \;, \label{Eq:NE_mthd_lim_comp_w}
    \end{align}
    Equation \eqref{Eq:NE_mthd_lim_comp_u} is exactly the update rule for the Nørsett–Euler method when applied to the KdV equation \eqref{kdv}, which proves the AP 
    property for the $u$-component. From equations \eqref{Eq:NE_mthd_lim_comp_v} and \eqref{Eq:NE_mthd_lim_comp_w}, we see that $\vh^{n+1} = i \xi \, \uh^{n+1}$ and 
    $\wh^{n+1} = (i \xi)^2 \, \uh^{n+1}$, which shows that the Nørsett–Euler method is also AP for both auxiliary components.
\end{proof}

We now establish the asymptotic-preserving property of ETDRK schemes of arbitrary order. We consider explicit ETDRK methods satisfying \eqref{1cond}, with $c_1=0$ and $c_i>0$ for $i=2,\dots,s$, whose coefficients $a_{ij}(z)$ and $b_i(z)$ are finite linear combinations of $\phi_k(c_\ell z)$ and $\phi_k(z)$, respectively, with $k\ge1$ and $c_\ell>0$. All ETDRK methods considered in this work satisfy these assumptions.

\begin{thm}\label{Th:AP_ETDRK_AA}
An $s$-stage ETDRK method of order $p$ satisfying the above assumptions, when applied to \eqref{Eq:kdvh_Fourier_cont} with well-prepared initial data \eqref{Eq:well_prep}, is asymptotic preserving (AP) for all components.
\end{thm}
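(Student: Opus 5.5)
The plan is to generalize the N{\o}rsett--Euler argument (Theorem~\ref{Th:AP_Norsett_Euler}) stage by stage, arguing by induction on the stage index $i$. As there, we may assume that the numerical solution at $t_n$ lies on the equilibrium manifold, $\qhat^n = \uh^n r(\xi)$ with $r(\xi)=[1,i\xi,(i\xi)^2]^T$. The key structural observation is that $N(\qhat)=\hat N(\xi,\uh)\,e_1$ depends only on the $u$-component and points in the $e_1$ direction. Consequently every nonlinear contribution in \eqref{ExInt2} has the form $\Delta t\, a_{ij}(\Delta t L(\tau))e_1 \hat N_j$ or $\Delta t\, b_i(\Delta t L(\tau))e_1\hat N_i$. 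By assumption, $a_{ij}$ and $b_i$ are finite linear combinations of $\phi_k(c_\ell\Delta t L(\tau))$ with $k\ge1$ and $c_\ell>0$. Applying Lemma~\ref{lem:phiproduct} with $\Delta t$ replaced by $c_\ell\Delta t>0$ therefore gives
\[
\lim_{\tau\to0} a_{ij}(\Delta t L(\tau))e_1 = a_{ij}(i\xi^3\Delta t)\,r(\xi),
\]
and the analogous limit for $b_i$.

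For the linear parts $\chi_i(\Delta t L)\qhat^n=e^{c_i\Delta t L(\tau)}\uh^n r(\xi)$, I would use the spectral decomposition \eqref{Eq:phi0_fun} with $\Delta t$ replaced by $c_i\Delta t$. Lemma~\ref{lem:covariants} gives $P_\pm r(\xi)\to0$, and $|e^{c_i\Delta t\lambda_\pm}|$ stays bounded because $\operatorname{Re}\lambda_\pm=\order(\tau)$, which follows from \eqref{Climit} since $\lambda^0_\pm$ is purely imaginary. Hence
\[
e^{c_i\Delta t L(\tau)}\uh^n r(\xi)\to e^{c_i i\xi^3\Delta t}\uh^n r(\xi).
\]
For $c_1=0$ this is trivial.

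The induction then runs as follows. Suppose that for $j<i$ the stages satisfy $Y_j(\tau)\to \hat U_j\, r(\xi)$, where $\hat U_j$ is the $j$-th stage of the same ETDRK method applied to \eqref{Eq:kdv_Fourier_cont}. By continuity of the pseudospectral nonlinearity $N$ in $\uh$, we get $N(Y_j)\to \hat N(\xi,\hat U_j)e_1$. Combining the two limits above shows that $Y_i\to \hat U_i r(\xi)$, where
\[
\hat U_i=e^{c_i i\xi^3\Delta t}\uh^n+\Delta t\sum_{j<i}a_{ij}(i\xi^3\Delta t)\hat N(\xi,\hat U_j),
\]
which is exactly the $i$-th stage of the ETDRK scheme for KdV with linear part $\tilde L=i\xi^3$. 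The same computation applied to the update formula gives $\qhat^{n+1}\to \uh^{n+1}r(\xi)$, where $\uh^{n+1}$ is the limit-scheme update. This yields simultaneously the consistent limiting scheme for $u$ and the equilibrium relations $\vh^{n+1}=i\xi\uh^{n+1}$, $\wh^{n+1}=(i\xi)^2\uh^{n+1}$, so the assumption at the next step is reproduced and the argument propagates over all steps. Consistency of the limit scheme follows from $\sum_j a_{ij}(0)=c_i$ and $\sum_i b_i(0)=1$.

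The main obstacle is rigor in passing limits through products where only one factor converges. In particular, $\phi_k(c_\ell\Delta t L(\tau))$ times a vector that is only $r(\xi)$ up to $o(1)$ must be controlled, so I need $\phi_k(c_\ell\Delta t L(\tau))$ to be uniformly bounded in $\tau$. I would obtain this from Lemma~\ref{Lemma:phi_i_fun_eval}, since the $\phi_k$ values at the eigenvalues stay bounded by Lemma~\ref{lem:phitau} and the covariants converge by Lemma~\ref{lem:covariants}. A related subtlety is that $N(Y_j)$ is exactly proportional to $e_1$ for every $\tau$, which is precisely why no $\phi_0$-type fast oscillation survives. This is the contrast with Theorem~\ref{Th:AP_Lawson_Euler}, and I would remark on it explicitly.
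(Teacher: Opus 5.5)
Your proposal is correct and follows essentially the same route as the paper's proof: an induction over time steps and over stages, Lemma~\ref{lem:covariants} for the $e^{c_i\Delta t L(\tau)}\qhat^n$ terms, Lemma~\ref{lem:phiproduct} with $\Delta t$ replaced by $c_\ell\Delta t$ for the $a_{ij}$ and $b_i$ terms, and continuity of the pseudospectral nonlinearity. Your uniform-boundedness remark makes explicit a limit interchange the paper performs silently, but the bound is really needed for $e^{c_i\Delta t L(\tau)}$ acting on $\qhat^n$ (which for $n\ge1$ lies on the equilibrium manifold only up to $o(1)$), not for $\phi_k$, since $N(\Yh_j)$ is exactly parallel to $e_1$ --- and that bound follows from your observation that $|e^{c_i\Delta t\lambda_\pm}|$ stays bounded together with the convergence of the covariants.
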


\begin{proof}
Consider an $s$-stage ETDRK scheme of the form \eqref{ExInt2}
satisfying the above assumptions. When applied to the system \eqref{Eq:kdvh_Fourier_cont}, the scheme updates the solution vector $\qhat^n = [\uh^n, \vh^n, \wh^n]^T \approx \qhat(\xi, t_n)$ via
\begin{subequations}\label{Eq:ExpUpdateKdVH}
\begin{align}
    \Yh_i &= \phi_{0}(c_i Z) \qhat^n + \Delta t \sum_{j=1}^{i-1} a_{ij}(Z) N(\Yh_j), \quad i = 1, \dots, s, \label{Eq:ExpUpdateKdVH_a} \\ 
    \qhat^{n+1} &= \phi_{0}(Z) \qhat^n + \Delta t \sum_{i=1}^{s} b_{i}(Z) N(\Yh_i)\;, \label{Eq:ExpUpdateKdVH_b}
\end{align}
\end{subequations}
where $Z = \Delta t L(\tau)$ and $\Yh_i$ denotes the $i$-th stage vector approximation. To distinguish the
vector nonlinear term from its scalar first component, write
$N(\qhat)=e_1\mathcal{N}(\uh)$, where $e_1=[1,0,0]^T$ and $\mathcal{N}(\uh)$ is the nonlinear part of the kdV equation \eqref{Eq:kdv_Fourier_cont}.

For comparison, applying the same method directly to the scalar limiting KdV equation \eqref{Eq:kdv_Fourier_cont} in Fourier space yields
\begin{subequations}\label{Eq:ExpUpdateKdV}
\begin{align}
    \gh_i &= \phi_{0}(c_i \Delta t \, i\xi^3) \uh^n + \Delta t \sum_{j=1}^{i-1} a_{ij}(\Delta t \, i\xi^3) \mathcal{N}(\gh_j), \quad i = 1, \dots, s, \\
    \uh^{n+1} &= \phi_{0}(\Delta t \, i\xi^3) \uh^n + \Delta t \sum_{i=1}^{s} b_{i}(\Delta t \, i\xi^3) \mathcal{N}(\gh_i)\;,
\end{align}
\end{subequations}
where $\gh_i$ denotes the scalar $i$-th stage solution approximation.
We argue by induction over the time steps, keeping $\Delta t>0$
fixed. With a slight abuse of notation, let $\uh^n$ denote the numerical solution of the limiting
KdV equation and assume that
\[
\lim_{\tau\to0}\qhat^n
=
\begin{bmatrix}
\uh^n \\
i\xi\uh^n \\
(i\xi)^2\uh^n
\end{bmatrix}.
\]
This holds at $n=0$ by the well-prepared initial data assumption. We show that this also holds for the time level $t=t_{n+1}$.

From the definition \eqref{Eq:phi0_fun} and Lemma \ref{lem:covariants}, the first terms on the right-hand sides of \eqref{Eq:ExpUpdateKdVH_a} and \eqref{Eq:ExpUpdateKdVH_b} satisfy
\begin{align}\label{Eq:phi_0_tau_lim}
\lim_{\tau \to 0} \left(\phi_0(c_i Z)\hat{q}^{n}\right) = \phi_0(c_i \Delta t\, i\xi^3)
\begin{bmatrix} \uh^n \\ i\xi\uh^n \\ (i\xi)^2\uh^n \end{bmatrix},
\quad
\lim_{\tau \to 0} \left(\phi_0(Z)\hat{q}^{n}\right) = \phi_0(\Delta t \, i\xi^3)
\begin{bmatrix} \uh^n \\ i\xi\uh^n \\ (i\xi)^2\uh^n \end{bmatrix}.
\end{align}

We proceed by induction to demonstrate that for all stages $i = 1, 2, \dots, s$,
\begin{align}\label{Eq:KdVH_stage_tau_lim}
    \lim_{\tau \to 0} \Yh_i = \begin{bmatrix} \gh_i \\ i \xi \gh_i \\ (i \xi)^2 \gh_i \end{bmatrix}.
\end{align}

Since $c_1 = 0$ we have $\Yh_1 = \qhat^n$and $\gh_1=\uh^n$. Hence the time-step induction hypothesis establishes \eqref{Eq:KdVH_stage_tau_lim} for $i=1$.

Assume that \eqref{Eq:KdVH_stage_tau_lim} holds for all stage indices $j = 1, 2, \dots, i-1$. In particular, the first component of $\Yh_j$ converges to $\gh_j$;
hence, by continuity of the nonlinear pseudospectral operator,
$N(\Yh_j)=e_1\mathcal{N}((\Yh_j)_1)$ converges to 
$e_1\mathcal{N}(\gh_j)$ as $\tau\to0$. Because $a_{ij}(z)$ are finite linear combinations of
$\varphi_k(c_\ell z)$ with $k\ge1$ and $c_\ell>0$, applying the induction hypothesis alongside \eqref{Eq:phi_0_tau_lim} and Lemmas \ref{lem:phitau}--\ref{lem:phiproduct} with $\Delta t$
replaced by $c_\ell\Delta t$ in each coefficient term, yields
\begin{align}
    \lim_{\tau \to 0} \Yh_i &= \phi_0(c_i \Delta t\, i\xi^3)
    \begin{bmatrix} \uh^n \\ i\xi\uh^n \\ (i\xi)^2\uh^n \end{bmatrix} + \Delta t \sum_{j=1}^{i-1} a_{ij}(\Delta t \, i \xi^3) \begin{bmatrix} 1 \\ i\xi \\ (i\xi)^2 \end{bmatrix} \mathcal{N}(\gh_j) \nonumber \\
    &= \begin{bmatrix} 1 \\ i\xi \\ (i\xi)^2 \end{bmatrix} \left( \phi_0(c_i \Delta t\, i\xi^3) \uh^n + \Delta t \sum_{j=1}^{i-1} a_{ij}(\Delta t \, i \xi^3) \mathcal{N}(\gh_j) \right) = \begin{bmatrix} \gh_i \\ i \xi \, \gh_i \\ (i \xi)^2 \, \gh_i \end{bmatrix},
\end{align}
which establishes \eqref{Eq:KdVH_stage_tau_lim} for stage $i$.

Finally, taking the limit $\tau \to 0$ in the full update expression \eqref{Eq:ExpUpdateKdVH_b}, using the fact that $b_i(z)$ are linear combinations of $\varphi_k(z)$ ($k \ge 1$), gives
\begin{align}
    \lim_{\tau \to 0} \qhat^{n+1} &= \phi_0(\Delta t\, i\xi^3) \begin{bmatrix} \uh^n \\ i\xi \, \uh^n \\ (i\xi)^2 \, \uh^n \end{bmatrix} + \Delta t \sum_{i=1}^{s} b_{i}(\Delta t \, i \xi^3) \begin{bmatrix} 1 \\ i\xi \\ (i\xi)^2 \end{bmatrix} \mathcal{N}(\gh_i) \nonumber \\
    &= \begin{bmatrix} 1 \\ i\xi \\ (i\xi)^2 \end{bmatrix} \left( \phi_0(\Delta t\, i\xi^3) \uh^n + \Delta t \sum_{i=1}^{s} b_{i}(\Delta t \, i \xi^3) \mathcal{N}(\gh_i) \right) = \begin{bmatrix} \uh^{n+1} \\ i \xi \, \uh^{n+1} \\ (i \xi)^2 \, \uh^{n+1} \end{bmatrix}.
\end{align}

The first component confirms that the limit solution $\lim_{\tau \to 0} \uh^{n+1}$ identically matches the $s$-stage order $p$ ETDRK discretization of the limiting KdV equation. Furthermore, the second and third components establish $\lim_{\tau \to 0} \vh^{n+1} = i\xi \, \uh^{n+1}$ and $\lim_{\tau \to 0} \wh^{n+1} = (i\xi)^2 \uh^{n+1}$. Thus, the induction hypothesis holds at time level $n+1$.
This completes the proof of asymptotic preservation.
\end{proof}

\begin{remark}
The AP property does not in general imply asymptotic accuracy. An order-$p$ AP method may, in the limit $\tau\to0$, reduce to a consistent discretization of the limiting problem having order lower
than $p$. For the ETD methods considered in this work, however, our
numerical experiments indicate that the classical order is retained in
the stiff limit, so that these methods are also AA. The Lawson methods
are not AP for the full system and hence are not AA for the full system.
As discussed in Section~\ref{sec:tests}, stiff order appears instead to
play an important role in the accuracy for intermediate values of
$\tau$ and in uniform accuracy, although a precise characterization of
this relationship is beyond the scope of the present work.
\end{remark}

%==========================%
\section{Numerical results}
\label{sec:tests}
%=========================%
In this section, we verify the AP properties proved in the previous section and conduct additional investigations of the accuracy of exponential methods for the KdVH system with a solitary wave solution. We also present a comparison of computational efficiency of exponential methods relative to implicit–explicit (ImEx) methods for the KdVH system.

%\subsection{Numerical calculation of the $\phi$-functions}
% The implementation of exponential methods necessitates the accurate evaluation of $\phi$-functions, which is a non-trivial and well-studied challenge. The primary difficulty arises from cancellation errors when evaluating higher-order $\phi$-functions for small values of $z$. To mitigate this, a common strategy is to utilize a Taylor series expansion for small $z$ and the exact formulas for larger values. In our implementation, we adopt a switching threshold of $z = 10^{-8}$, an approach similar to that of Cox and Matthews \cite{cox2002exponential}. However, it is important to note that this method can occasionally exhibit inaccuracies near the switching point, depending on the application. While alternative techniques exist, such as those based on Pad\'{e} approximations or contour integration \cite{kassam2005fourth}, they each possess distinct advantages and drawbacks. For the purposes of this study, we first employ the Lagrange-Sylvester formula to compute the $\phi$-functions of the matrix exactly. For the subsequent scalar evaluations, we apply the switching strategy used by Cox and Matthews. In our numerical experiments, this combined approach consistently yielded the desired accuracy.

%The implementation of exponential integrators requires the evaluation of
%$\phi$-functions of matrices. 
In the present work, the matrix
$\phi$-functions are first reduced to scalar $\phi$-function evaluations
using the Lagrange--Sylvester formula, after which the resulting scalar
$\phi$-functions are computed using 
the Taylor series for 
$|z| < 10^{-8}$ and the standard closed-form expressions otherwise, as proposed in \cite{cox2002exponential}. All the code used to produce the numerical results presented in this paper can be found in \cite{biswas2026APExpKdVHRepro}.

%a switching strategy. The accurate
%evaluation of scalar $\phi$-functions is a well-known challenge,
%particularly for small values of the argument, where cancellation errors
%can occur.  Among the various approaches that have been proposed (see e.g. \cite{cox2002exponential,kassam2005fourth}), we have obtained sufficiently accurate results by using a Taylor series expansion when

\subsection{Tests of AP}
The setup for tests of the AP property is as follows.
Both the KdV equation and the KdVH system are solved on the domain $[x_L, x_R] \times (0, T] = [-40, 40] \times (0, 5]$ with periodic boundary conditions. We study the propagation of a soliton
%The initial condition for the KdV equation is obtained by evaluating the exact solution $\ukdv(x, t)$ at $t=0$, where
\begin{align} \label{soliton}
    \ukdv(x, t) = A \sech^2 \left( \frac{\sqrt{3A}(x - ct)}{6} \right),
\end{align}
with $A=3c$ and $c=1.2$. For KdV, the initial condition is obtained by setting $t=0$ in \eqref{soliton}.  For the KdVH system, we take $u(x, 0) = \ukdv(x, 0)$, $v(x, 0) = D \ukdv(x, 0)$, and $w(x, 0) = D^2 \ukdv(x, 0)$, where $D$ is the Fourier differential operator.  Thus the initial data for $v$ and $w$ are well-prepared to leading order (but not exactly).
We semidiscretize both problems using $2^9$ grid points via the pseudospectral Fourier collocation method, as described in Sec~\ref{sec:space}.
The resulting ODEs are integrated in Fourier space, and the final solution is obtained via the inverse FFT. To numerically verify the AP property of the schemes, we evaluate the errors in the $\ell_2$ norm by measuring the discrepancy between the fully discretized KdVH solution, denoted by $\bm{u}$, $\bm{v}$, and $\bm{w}$, and the numerical solution $\bm{\ukdv}$ of the KdV equation. Specifically, we compute the $\ell_2$ norm of the differences $\bm{u} - \bm{\ukdv}$, $\bm{v} - D \bm{\ukdv}$, and $\bm{w} - D^2 \bm{\ukdv}$. The experimental orders of convergence (EOCs) are computed using
consecutive values of $\tau$ differing by a factor of ten. For brevity,
Tables~2--8 report only every other value of $\tau$ and its corresponding EOC.
%, where $D$ represents the Fourier differential operator applied to the reference KdV solution. 
%------------------------------------------------------%
\subsubsection{IF Methods}
%------------------------------------------------------%
Here we test the three IF methods introduced in Section~\ref{sec:ERK}: Lawson--Euler (order~1, stiff order~1), Lawson2b (order~2, stiff order~1), and Lawson4 (order~4, stiff order~1). We use the setup described above with a time step of $\Delta t = 0.003$ for the Lawson--Euler method and $\Delta t = 0.015$ for the remaining methods. These step sizes are chosen to ensure stable computations up to the final time $t=5$.
The computed errors are given in Table~\ref{tab:ap_results_Lawson_Euler}, Table~\ref{tab:ap_results_Lawson2b}, and Table~\ref{tab:ap_results_Lawson4}. 
All three methods exhibit convergence as $\tau\to0$ for $u$, but not for $v$ or $w$.  This is consistent with our analysis, in the sense that Lawson methods are not AP.  It is worth noting that our analysis does not explain the fact that $u$ \emph{does} converge to the correct value.

% Lawson-Euler
\begin{table}[!tbp]
\centering
\captionsetup{width=0.9\textwidth} % Limits caption to 80% of page width
%\caption{AP test of Lawson-Euler. The $\ell_2$ norms of the errors are calculated relative to the numerical solution of the KdV equation.}
\caption{AP test of Lawson-Euler.}
\label{tab:ap_results_Lawson_Euler}
%\resizebox{\textwidth}{!}
\begin{tabular}{ccccccc}
  \hline
  \textbf{$\tau$} & \textbf{error u} & \textbf{EOC u} & \textbf{error v} & \textbf{EOC v} & \textbf{error w} & \textbf{EOC w} \\ \hline
  1.00e-02 & 2.32e-02 &  & 2.19e-02 &  & 2.90e-02 &  \\
  %1.00e-03 & 2.38e-03 & 0.99 & 1.87e-03 & 1.07 & 2.57e-03 & 1.05 \\
  1.00e-04 & 2.38e-04 & 1.00 & 1.11e-03 & 0.23 & 1.98e-03 & 0.11 \\
   %1.00e-05 & 2.42e-05 & 0.99 & 3.37e-03 & -0.48 & 3.89e-03 & -0.29 \\
  1.00e-06 & 2.47e-06 & 0.99 & 3.61e-03 & -0.03 & 9.10e-03 & -0.37 \\
   %1.00e-07 & 2.45e-07 & 1.00 & 1.74e-03 & 0.32 & 3.34e-03 & 0.44 \\
  1.00e-08 & 2.44e-08 & 1.00 & 2.38e-03 & -0.14 & 4.59e-03 & -0.14 \\
   %1.00e-09 & 2.47e-09 & 1.00 & 2.32e-03 & 0.01 & 4.46e-03 & 0.01 \\
  1.00e-10 & 2.44e-10 & 1.00 & 4.29e-03 & -0.27 & 1.20e-02 & -0.43 \\
  \hline
\end{tabular}
\end{table}
% Lawson2b
\begin{table}[!tbp]
\centering
\captionsetup{width=0.9\textwidth} % Limits caption to 80% of page width
\caption{AP test of Lawson2b.}

\label{tab:ap_results_Lawson2b}
%\resizebox{\textwidth}{!}
{\begin{tabular}{ccccccc}
  \hline
  \textbf{$\tau$} & \textbf{error u} & \textbf{EOC u} & \textbf{error v} & \textbf{EOC v} & \textbf{error w} & \textbf{EOC w} \\ \hline
  1.00e-02 & 2.29e-02 &  & 2.19e-02 &  & 2.91e-02 &  \\
   %1.00e-03 & 2.25e-03 & 1.01 & 3.20e-02 & -0.17 & 1.38e-02 & 0.32 \\
  1.00e-04 & 2.30e-04 & 0.99 & 9.23e-03 & 0.54 & 2.09e-02 & -0.18 \\
   %1.00e-05 & 2.31e-05 & 1.00 & 9.33e-03 & -0.00 & 1.37e-02 & 0.19 \\
  1.00e-06 & 2.29e-06 & 1.00 & 1.35e-02 & -0.16 & 3.24e-02 & -0.38 \\
   %1.00e-07 & 1.91e-07 & 1.08 & 5.61e-02 & -0.62 & 1.14e-01 & -0.54 \\
  1.00e-08 & 2.25e-08 & 0.93 & 1.35e-02 & 0.62 & 4.40e-02 & 0.41 \\
   %1.00e-09 & 2.27e-09 & 1.00 & 1.81e-02 & -0.13 & 4.39e-02 & 0.00 \\
  1.00e-10 & 2.51e-10 & 0.96 & 1.97e-02 & -0.04 & 4.98e-02 & -0.05 \\
  \hline
\end{tabular}}
\end{table}
% Lawson4
\begin{table}[!tbp]
\centering
\captionsetup{width=0.9\textwidth} % Limits caption to 80% of page width
\caption{AP test of Lawson4.}
\label{tab:ap_results_Lawson4}
%\resizebox{\textwidth}{!}
\begin{tabular}{ccccccc}
  \hline
  \textbf{$\tau$} & \textbf{error u} & \textbf{EOC u} & \textbf{error v} & \textbf{EOC v} & \textbf{error w} & \textbf{EOC w} \\ \hline
  1.00e-02 & 2.28e-02 &  & 2.19e-02 &  & 2.91e-02 &  \\
   %1.00e-03 & 2.34e-03 & 0.99 & 3.19e-02 & -0.16 & 1.37e-02 & 0.33 \\
  1.00e-04 & 2.31e-04 & 1.01 & 6.30e-03 & 0.70 & 1.33e-02 & 0.01 \\
   %1.00e-05 & 2.32e-05 & 1.00 & 5.58e-03 & 0.05 & 1.12e-02 & 0.08 \\
  1.00e-06 & 2.34e-06 & 1.00 & 5.20e-03 & 0.03 & 1.22e-02 & -0.04 \\
   %1.00e-07 & 2.00e-07 & 1.07 & 5.33e-02 & -1.01 & 1.09e-01 & -0.95 \\
  1.00e-08 & 2.27e-08 & 0.95 & 8.46e-03 & 0.80 & 2.48e-02 & 0.65 \\
   %1.00e-09 & 2.26e-09 & 1.00 & 1.75e-02 & -0.32 & 4.31e-02 & -0.24 \\
  1.00e-10 & 2.37e-10 & 0.98 & 1.71e-02 & 0.01 & 4.04e-02 & 0.03 \\
  \hline
\end{tabular}
\end{table}
%------------------------------------------------------%
\subsubsection{ETD Methods}
%------------------------------------------------------%
Next, we test ETD methods. From this class, we present the results for four specific schemes: N{\o}rsett--Euler (order 1, stiff order 1), ETD2RK (order 2, stiff order 2), ETD3RK (order 3, stiff order 2), and Hochbruck-Ostermann (order 4, stiff order 4). These methods are applied to both semidiscrete systems \eqref{Eq:kdv_Fourier} and \eqref{semidiscretization} using $m = 2^9$ grid points, and integrated up to the final time $T = 5$. We use a time step of $\Delta t = 0.015$ for all the methods. This step size is chosen to ensure stable computation of the numerical solutions. The asymptotic errors for these methods are given in Table~\ref{tab:ap_results_Norsett_Euler}, Table~\ref{tab:ap_results_ETD2RK},  Table~\ref{tab:ap_results_ETD3RK}, and
Table~\ref{tab:ap_results_HochbruckOstermann}.
As predicted by our analysis, all of these methods exhibit the AP property for all components of the system.
% N{\o}rsett--Euler
\begin{table}[!tbp]
\centering
\captionsetup{width=0.9\textwidth} % Limits caption to 80% of page width
\caption{AP test of N{\o}rsett--Euler.}
\label{tab:ap_results_Norsett_Euler}
%\resizebox{\textwidth}{!}
\begin{tabular}{ccccccc}
  \hline
  \textbf{$\tau$} & \textbf{error u} & \textbf{EOC u} & \textbf{error v} & \textbf{EOC v} & \textbf{error w} & \textbf{EOC w} \\ \hline
  1.00e-02 & 2.28e-02 &  & 2.19e-02 &  & 2.91e-02 &  \\
   %1.00e-03 & 2.35e-03 & 0.99 & 2.18e-03 & 1.00 & 2.97e-03 & 0.99 \\
  1.00e-04 & 2.35e-04 & 1.00 & 2.20e-04 & 1.00 & 2.95e-04 & 1.00 \\
   %1.00e-05 & 2.35e-05 & 1.00 & 2.17e-05 & 1.01 & 2.92e-05 & 1.00 \\
  1.00e-06 & 2.35e-06 & 1.00 & 2.20e-06 & 0.99 & 3.03e-06 & 0.98 \\
   %1.00e-07 & 2.35e-07 & 1.00 & 2.14e-07 & 1.01 & 2.90e-07 & 1.02 \\
  1.00e-08 & 2.35e-08 & 1.00 & 2.12e-08 & 1.00 & 2.81e-08 & 1.01 \\
   %1.00e-09 & 2.36e-09 & 1.00 & 2.03e-09 & 1.02 & 2.72e-09 & 1.01 \\
  1.00e-10 & 2.32e-10 & 1.01 & 2.15e-10 & 0.98 & 2.96e-10 & 0.96 \\
  \hline
\end{tabular}
\end{table}
% ETD2RK
\begin{table}[!tbp]
\captionsetup{width=0.9\textwidth} % Limits caption to 80% of page width
\centering
\caption{AP test of ETD2RK.}
\label{tab:ap_results_ETD2RK}
%\resizebox{\textwidth}{!}
\begin{tabular}{ccccccc}
  \hline
  \textbf{$\tau$} & \textbf{error u} & \textbf{EOC u} & \textbf{error v} & \textbf{EOC v} & \textbf{error w} & \textbf{EOC w} \\ \hline
  1.00e-02 & 2.28e-02 &  & 2.19e-02 &  & 2.91e-02 &  \\
   %1.00e-03 & 2.35e-03 & 0.99 & 2.18e-03 & 1.00 & 2.97e-03 & 0.99 \\
  1.00e-04 & 2.35e-04 & 1.00 & 2.20e-04 & 1.00 & 2.95e-04 & 1.00 \\
   %1.00e-05 & 2.35e-05 & 1.00 & 2.17e-05 & 1.01 & 2.93e-05 & 1.00 \\
  1.00e-06 & 2.35e-06 & 1.00 & 2.20e-06 & 0.99 & 3.04e-06 & 0.98 \\
   %1.00e-07 & 2.35e-07 & 1.00 & 2.14e-07 & 1.01 & 2.91e-07 & 1.02 \\
  1.00e-08 & 2.35e-08 & 1.00 & 2.12e-08 & 1.00 & 2.83e-08 & 1.01 \\
   %1.00e-09 & 2.35e-09 & 1.00 & 2.03e-09 & 1.02 & 2.74e-09 & 1.01 \\
  1.00e-10 & 2.35e-10 & 1.00 & 2.18e-10 & 0.97 & 2.97e-10 & 0.96 \\
  \hline
\end{tabular}
\end{table}
% ETD3RK
\begin{table}[!tbp]
\centering
\captionsetup{width=0.9\textwidth} % Limits caption to 80% of page width
\caption{AP test of ETD3RK.}
\label{tab:ap_results_ETD3RK}
%\resizebox{\textwidth}{!}
\begin{tabular}{ccccccc}
  \hline
  \textbf{$\tau$} & \textbf{error u} & \textbf{EOC u} & \textbf{error v} & \textbf{EOC v} & \textbf{error w} & \textbf{EOC w} \\ \hline
  1.00e-02 & 2.28e-02 &  & 2.19e-02 &  & 2.91e-02 &  \\
   %1.00e-03 & 2.34e-03 & 0.99 & 2.18e-03 & 1.00 & 2.97e-03 & 0.99 \\
  1.00e-04 & 2.35e-04 & 1.00 & 2.20e-04 & 1.00 & 2.95e-04 & 1.00 \\
   %1.00e-05 & 2.35e-05 & 1.00 & 2.17e-05 & 1.01 & 2.92e-05 & 1.00 \\
  1.00e-06 & 2.35e-06 & 1.00 & 2.20e-06 & 0.99 & 3.03e-06 & 0.98 \\
   %1.00e-07 & 2.35e-07 & 1.00 & 2.14e-07 & 1.01 & 2.90e-07 & 1.02 \\
  1.00e-08 & 2.35e-08 & 1.00 & 2.12e-08 & 1.00 & 2.81e-08 & 1.01 \\
   %1.00e-09 & 2.36e-09 & 1.00 & 2.03e-09 & 1.02 & 2.72e-09 & 1.01 \\
  1.00e-10 & 2.42e-10 & 0.99 & 2.20e-10 & 0.96 & 3.00e-10 & 0.96 \\
  \hline
\end{tabular}
\end{table}
% %ETD4RK
% \begin{table}[!tbp]
% \centering
% \caption{AP test of ETD4RK. The $\ell_2$ norms of the errors are calculated relative to the numerical solution of the KdV equation.}
% \label{tab:ap_results_ETD4RK}
% %\resizebox{\textwidth}{!}
% {\begin{tabular}{ccccccc}
%   \hline
%   \textbf{$\tau$} & \textbf{error u} & \textbf{EOC u} & \textbf{error v} & \textbf{EOC v} & \textbf{error w} & \textbf{EOC w} \\ \hline
%   1.00e-02 & 2.28e-02 &  & 2.19e-02 &  & 2.91e-02 &  \\
%   %1.00e-03 & 2.34e-03 & 0.99 & 2.18e-03 & 1.00 & 2.97e-03 & 0.99 \\
%   1.00e-04 & 2.35e-04 & 1.00 & 2.20e-04 & 1.00 & 2.95e-04 & 1.00 \\
%   %1.00e-05 & 2.35e-05 & 1.00 & 2.17e-05 & 1.01 & 2.93e-05 & 1.00 \\
%   1.00e-06 & 2.35e-06 & 1.00 & 2.20e-06 & 0.99 & 3.04e-06 & 0.98 \\
%   %1.00e-07 & 2.35e-07 & 1.00 & 2.14e-07 & 1.01 & 3.09e-07 & 0.99 \\
%   1.00e-08 & 2.35e-08 & 1.00 & 2.13e-08 & 1.00 & 3.02e-08 & 1.01 \\
%   %1.00e-09 & 2.35e-09 & 1.00 & 2.03e-09 & 1.02 & 2.93e-09 & 1.01 \\
%   1.00e-10 & 2.34e-10 & 1.00 & 2.18e-10 & 0.97 & 3.15e-10 & 0.97 \\
%   \hline
% \end{tabular}}
% \end{table}
% Hochbruck-Ostermann
\begin{table}[!tbp]
\centering
\caption{AP test of Hochbruck-Ostermann.}
\label{tab:ap_results_HochbruckOstermann}
%\resizebox{\textwidth}{!}
{\begin{tabular}{ccccccc}
  \hline
  \textbf{$\tau$} & \textbf{error u} & \textbf{EOC u} & \textbf{error v} & \textbf{EOC v} & \textbf{error w} & \textbf{EOC w} \\ \hline
  1.00e-02 & 2.28e-02 &  & 2.19e-02 &  & 2.91e-02 &  \\
  %1.00e-03 & 2.34e-03 & 0.99 & 2.18e-03 & 1.00 & 2.97e-03 & 0.99 \\
  1.00e-04 & 2.35e-04 & 1.00 & 2.20e-04 & 1.00 & 2.95e-04 & 1.00 \\
  %1.00e-05 & 2.35e-05 & 1.00 & 2.17e-05 & 1.01 & 2.92e-05 & 1.00 \\
  1.00e-06 & 2.35e-06 & 1.00 & 2.20e-06 & 0.99 & 3.03e-06 & 0.98 \\
  %1.00e-07 & 2.35e-07 & 1.00 & 2.14e-07 & 1.01 & 2.90e-07 & 1.02 \\
  1.00e-08 & 2.35e-08 & 1.00 & 2.12e-08 & 1.00 & 2.82e-08 & 1.01 \\
  %1.00e-09 & 2.35e-09 & 1.00 & 2.03e-09 & 1.02 & 2.72e-09 & 1.01 \\
  1.00e-10 & 2.35e-10 & 1.00 & 2.17e-10 & 0.97 & 2.96e-10 & 0.96 \\
  \hline
\end{tabular}}
\end{table}
% % ETD5RKF
% \begin{table}[!tbp]
% \centering
% \caption{AP test of the ETD5RKF method for the KdVH system. The $\ell_2$ norms of the errors are calculated relative to the numerical solution of the KdV equation.}
% \label{tab:ap_results_ETD5RKF}
% %\resizebox{\textwidth}{!}
% {\begin{tabular}{ccccccc}
%   \hline
%   \textbf{$\tau$} & \textbf{error u} & \textbf{EOC u} & \textbf{error v} & \textbf{EOC v} & \textbf{error w} & \textbf{EOC w} \\ \hline
%   1.00e-02 & 2.28e-02 &  & 2.19e-02 &  & 2.91e-02 &  \\
%   %1.00e-03 & 2.34e-03 & 0.99 & 2.18e-03 & 1.00 & 2.97e-03 & 0.99 \\
%   1.00e-04 & 2.35e-04 & 1.00 & 2.20e-04 & 1.00 & 2.95e-04 & 1.00 \\
%   %1.00e-05 & 2.35e-05 & 1.00 & 2.17e-05 & 1.01 & 2.92e-05 & 1.00 \\
%   1.00e-06 & 2.35e-06 & 1.00 & 2.20e-06 & 0.99 & 3.03e-06 & 0.98 \\
%   %1.00e-07 & 2.35e-07 & 1.00 & 2.14e-07 & 1.01 & 2.90e-07 & 1.02 \\
%   1.00e-08 & 2.35e-08 & 1.00 & 2.12e-08 & 1.00 & 2.81e-08 & 1.01 \\
%   %1.00e-09 & 2.36e-09 & 1.00 & 2.03e-09 & 1.02 & 2.72e-09 & 1.01 \\
%   1.00e-10 & 2.34e-10 & 1.00 & 2.17e-10 & 0.97 & 2.95e-10 & 0.96 \\
%   \hline
% \end{tabular}}
% \end{table}

%------------------------------------------------------%
\subsection{Tests of asymptotic and uniform accuracy}\label{sec:AAtests}
%------------------------------------------------------%

The AP property guarantees that the solution converges to that of the limit system as $\tau \to 0$, but it does not indicate the numerical convergence rate that will be observed as $\Delta t \to 0$ for different values of $\tau$. For stiff systems, it is known that the observed convergence rate may deteriorate for moderate or highly stiff parameter values \cite{boscarino2009class}. In this section, we investigate how the temporal convergence rate depends on the relaxation parameter $\tau$.

As mentioned above, a scheme is said to be asymptotically accurate if it retains its formal order of accuracy in the stiff limit $\tau \to 0$. Furthermore, if the order of accuracy is preserved uniformly for all values of $\tau$, then the scheme is said to be uniformly accurate.
To assess the asymptotic and uniform accuracy of the exponential methods considered in this work, we perform temporal convergence studies for several values of the relaxation parameter $\tau$, ranging from $10^{-2}$ to $10^{-8}$. For each fixed value of $\tau$, the KdVH system is discretized in space using $m = 2^9$ grid points so that spatial discretization errors are negligible. We then solve the resulting semidiscrete system using the Lawson2b, Lawson4, ETD2RK, ETD4RK, and Hochbruck--Ostermann methods. For each method, a sequence of time step sizes is considered, and the resulting errors are used to determine the observed temporal orders of convergence for all solution components.

In Figure \ref{fig:AA_property_Lawson2b} we show results for the IF method Lawson2b.  We see consistent second-order convergence of $u$ for all values of $\tau$.  In contrast, second-order convergence of $v$ and $w$ is observed only in the non-stiff regime, when $\Delta t \ll \tau$.
For Lawson4, in Figure \ref{fig:AA_property_Lawson4}, results for $v$ and $w$ are similar.  Convergence of $u$ is more erratic than for Lawson2b; a consistent 4th-order rate is seen only for the largest and smallest values of $\tau$.

For ETD methods, shown in Figures \ref{fig:AA_property_ETD2RK}-\ref{fig:AA_property_HochbruckOstermann}, we see consistent convergence of all components at the expected rate.

\begin{figure}[!tbp]
    \centering
        \centering
        \includegraphics[width=1.0\textwidth]{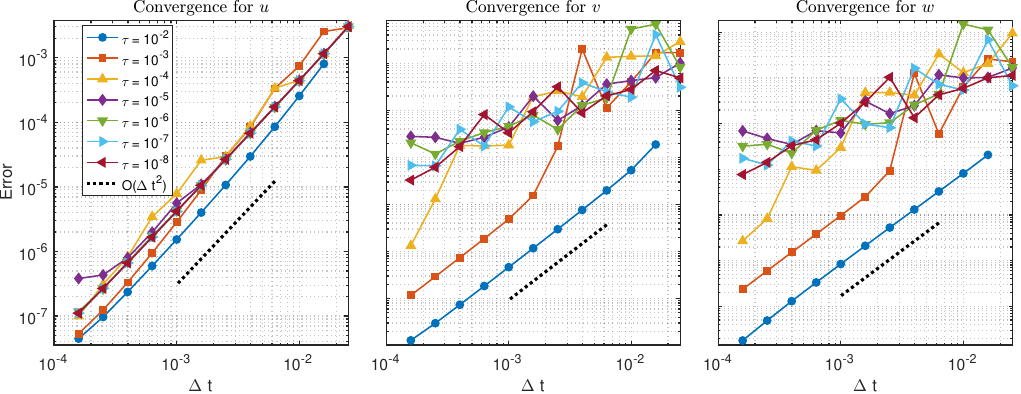}
    \caption{Numerical results illustrating the asymptotic and uniform accuracy properties of the second-order Lawson2b exponential integrator (stiff order one). The method exhibits uniform second-order convergence for the $u$-component, while the auxiliary components attain second-order convergence only in the nonstiff regime.}
    \label{fig:AA_property_Lawson2b}
\end{figure}
\begin{figure}[!tbp]
    \centering
        \centering
        \includegraphics[width=1.0\textwidth]{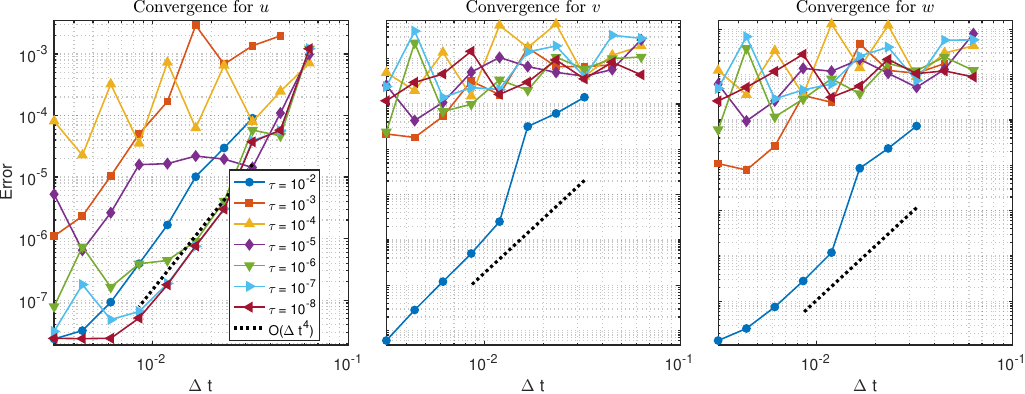}
    \caption{Numerical results illustrating the asymptotic and uniform accuracy properties of the fourth-order Lawson4 exponential integrator (stiff order one). The method attains fourth-order convergence for the $u$-component in both the nonstiff regime and the stiff limit, while order reduction is observed for intermediate values of $\tau$. The auxiliary components attain the formal order of accuracy only in the nonstiff regime.}
    \label{fig:AA_property_Lawson4}
\end{figure}
\begin{figure}[!tbp]
    \centering
        \centering
        \includegraphics[width=1.0\textwidth]{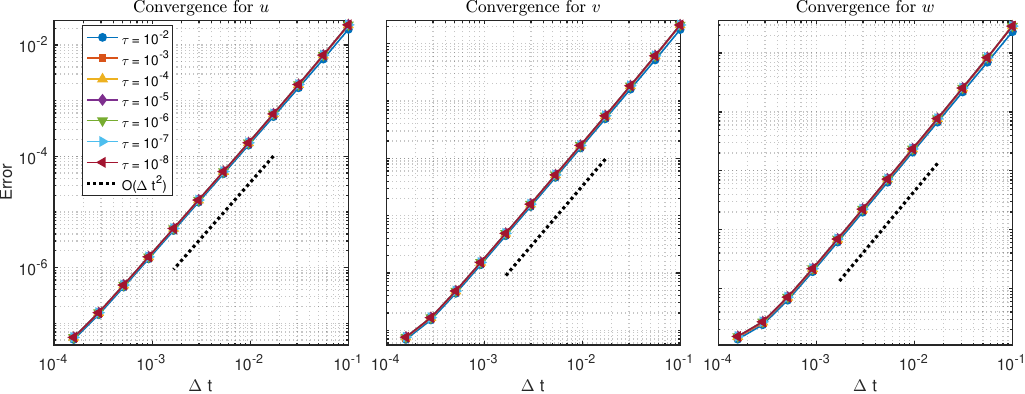}
    \caption{Numerical results illustrating the asymptotic and uniform accuracy properties of the second-order ETD2RK method (stiff order two). The method exhibits uniform second-order convergence for all components across all regimes.}
    \label{fig:AA_property_ETD2RK}
\end{figure}
\begin{figure}[htbp]
    \centering
    \includegraphics[width=1.0\textwidth]
    {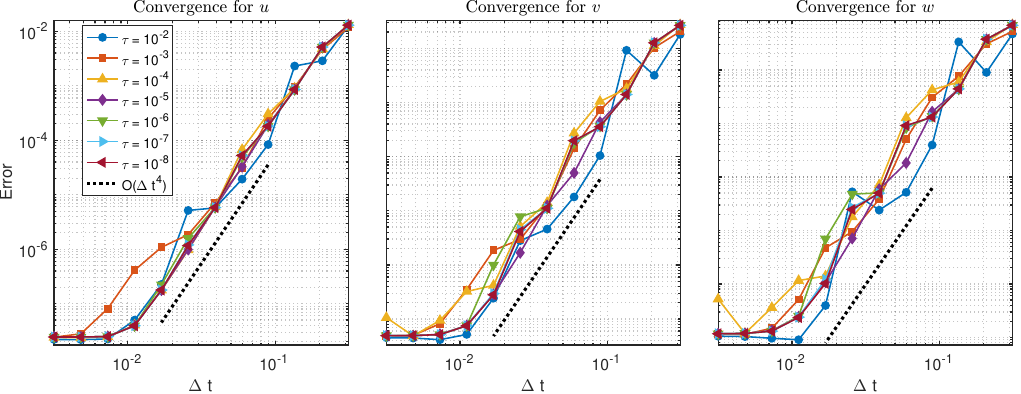}
    \caption{Numerical results illustrating the uniform accuracy properties of the fourth-order ETD4RK method (stiff order two). The method exhibits uniform fourth-order convergence for all components across all regimes.}
    \label{fig:AA_property_ETD4RK}
\end{figure}
\begin{figure}[!tbp]
    \centering
        \centering
        \includegraphics[width=1.0\textwidth]{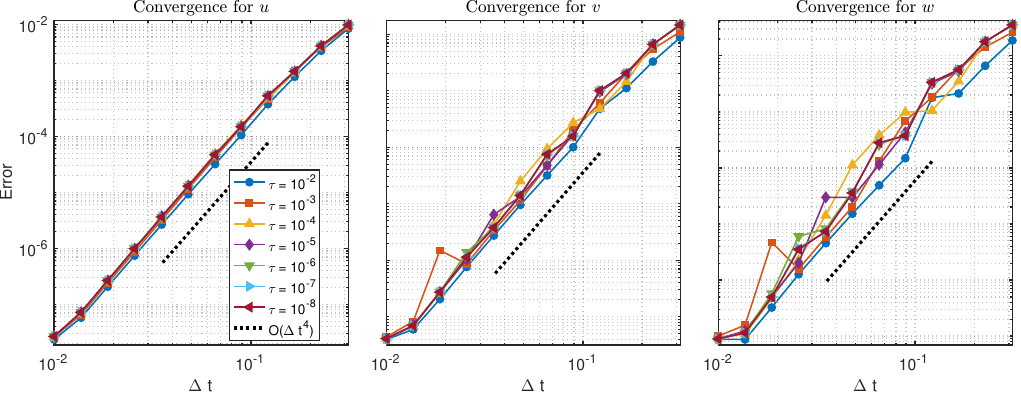}
    \caption{Numerical results illustrating the uniform accuracy properties of the fourth-order Hochbruck--Ostermann method (stiff order four). The method exhibits uniform fourth-order convergence for all components across all regimes.}
    \label{fig:AA_property_HochbruckOstermann}
\end{figure}
Note that the stagnation of the error curves at lower thresholds is due to a combination of three factors: the spatial discretization and floating-point roundoff error which is amplified to $\mathcal{O}(\varepsilon_{\text{mach}}/\tau)$ due to division by $\tau$, the algebraic tolerance of the Petviashvili method used to generate the reference solution, and the residual error accumulation from the time-stepping methods. 

The numerical results suggest that, similar to hyperbolic-to-hyperbolic relaxation systems, the AP property is closely related to asymptotic accuracy for the hyperbolic-to-dispersive relaxation problem considered in this work. In particular, methods possessing the AP property for a given component appear to recover their formal order of accuracy in the stiff limit. However, asymptotic accuracy alone does not guarantee uniform accuracy across all values of the relaxation parameter. Our experiments indicate that stiff order conditions play an important role in achieving uniform accuracy. In particular, the ETD methods with stiff order two or higher exhibit the expected convergence rates uniformly over the range of $\tau$ values considered, whereas methods with stiff order one may suffer from order reduction for intermediate values of $\tau$. These observations suggest a strong connection between stiff order conditions and uniform accuracy for exponential integrators applied to hyperbolic-to-dispersive relaxation systems. A rigorous analytical investigation of this relationship is beyond the scope of the present work and is left for future study.
%---------------------------------------------------------------------------%
\subsection{Cost comparison between exponential integrators and ImEx methods}
%---------------------------------------------------------------------------%
In this section we present a brief comparison of exponential and ImEx methods.  Such comparison is fraught with complications: the choice of methods for computing the matrix exponential and solving linear systems, the language of implementation, and numerous numerical parameters such as the temporal and spatial mesh sizes, the initial data, etc.  All of these can have a substantial influence on the results, so one should not draw overly general conclusions.  Nevertheless, a point of comparison still seems useful in establishing a rough idea of the competitiveness of the different methods.

% \david{Abhijit, explain the problem setup and what is measured, the hardware, etc.}

% In Figure \ref{} we provide work-efficiency diagrams for a range of methods of orders two to four.  Broadly speaking, exponential methods are competitive with ImEx methods, and can be more efficient at lower orders and/or with loose error tolerance.  At high order and tight tolerances, we note the remarkable efficiency of the relatively recent method ARK4(3)7L[2]SA.

We consider the KdVH system with $\tau = 10^{-5}$ on the spatial domain $[x_L, x_R] = [-40, 40]$. The system is initialized with a well-prepared traveling wave profile and semi-discretized using a Fourier collocation method on a uniform grid of $2^{10}$ spatial points to ensure that spatial truncation errors remain negligible. We compare ImEx and exponential integrators of the same order of accuracy by integrating up to a final time $t = 1$ using a sequence of uniform temporal step sizes $\Delta t$. For each configuration, we record both the global error at the final time and the computational cost. To eliminate random operating system background noise and ensure reproducible timing metrics, each simulation is executed five consecutive times, with the final recorded wall-clock runtime taken as the average over these runs. All computations are performed using a sequential implementation in MATLAB (version R2026a) on a workstation equipped with an Apple M4 Pro processor running under macOS Sequoia. To guarantee a fair comparison and eliminate multi-threading distortions during performance profiling, MATLAB execution is explicitly restricted to a single computational thread.

In Figure~\ref{fig:Cost_Comp_Exp_vs_ImEx}, we provide work-efficiency diagrams for a selection of numerical methods of orders two through four by plotting the $L^2$ error against the computational runtime on a log-log scale. Broadly speaking, exponential methods remain highly competitive with ImEx schemes, often exhibiting superior efficiency at lower orders or under looser error tolerances. Conversely, at high order and tight tolerances, we note the remarkable efficiency of the relatively recent ImEx method ARK4(3)7L[2]SA.

%\abhi{I have updated the cost–comparison plots. The new results are obtained by fixing the spatial resolution at a fine mesh of $2^9$ grid points and varying the time step size. These results are consistent across different ranges of time steps, unlike the previous results, which were sensitive to the CFL number. We still observe that the error stagnates around $10^{-9}$. 
%I checked the spatial discretization error (at $t = 0$) in the linear implicit part (the error in the explicit part is very small) and found that, for the $v$ and $w$ components, it is around $10^{-10}$ when $\tau = 10^{-5}$ and around $10^{-5}$ when $\tau = 10^{-10}$. This explains the error stagnation observed for $\tau = 10^{-5}$ (these errors are expected, since the linear implicit part involves division by a small $\tau$). However, I am not fully confident about the reliability of the results for $\tau = 10^{-10}$. 
%I also repeated the same experiment with a larger number of spatial grid points, but it appears that the spatial discretization error is already minimized for $2^9$ grid points.}
\begin{figure}
     \centering
     % First Row
     \begin{subfigure}{0.31\textwidth}
         \centering
         \includegraphics[width=\textwidth]{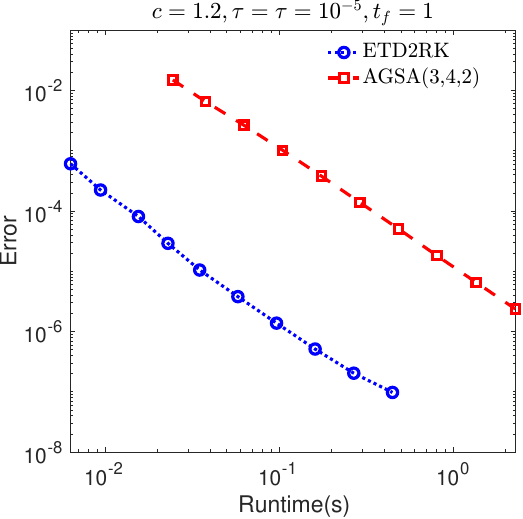}
     \end{subfigure}%
     \hspace{0.01\textwidth} % Reduced from 0.05 to 0.02
     \begin{subfigure}{0.31\textwidth}
         \centering
         \includegraphics[width=\textwidth]{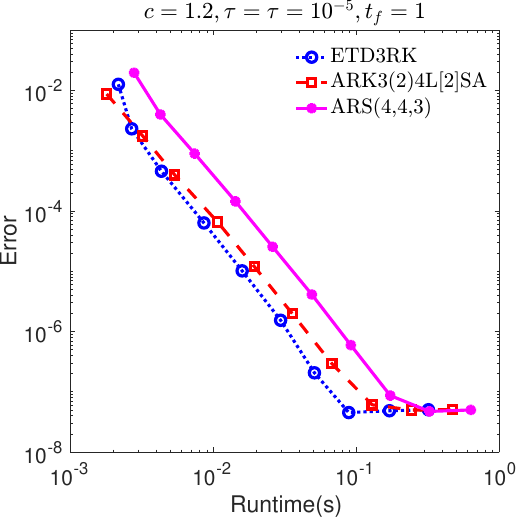}
     \end{subfigure}%
     \hspace{0.01\textwidth} % Reduced from 0.05 to 0.02
     \begin{subfigure}{0.31\textwidth}
         \centering
         \includegraphics[width=\textwidth]{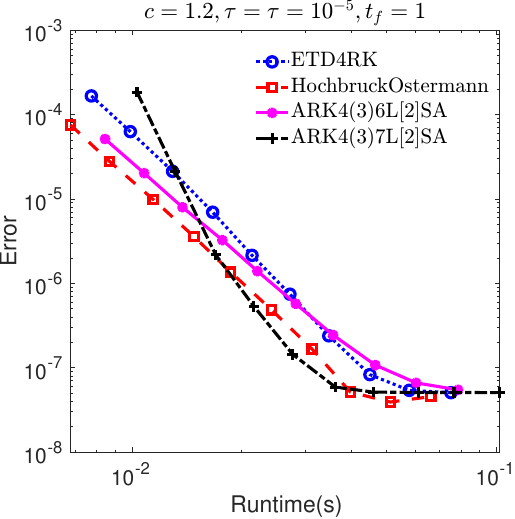}
     \end{subfigure}
     \caption{Cost measurement comparison for the $u$ component of the KdVH with $\tau = 10^{-5}$, between exponential methods and ImEx methods: (a) left panel for 2nd-order methods, (b) middle panel for 3rd-order methods, and (c) right panel for 4th-order methods. The errors in all these plots are measured with respect to the reference solutions computed using a Petviashvili-type method.}
     \label{fig:Cost_Comp_Exp_vs_ImEx}
 \end{figure}

%-----------------------------%
\section{Conclusions}
%-----------------------------%
With the goal of understanding the behavior of exponential integrators for nondissipative hyperbolic relaxation systems, we have considered the hyperbolic Korteweg--de Vries (KdVH) system as a representative example. Our analysis shows a clear distinction between Lawson and ETD methods in the relaxation limit. Lawson methods are, in general, not asymptotic preserving (AP) for the full KdVH system, since the auxiliary variables do not approach the equilibrium relations in the stiff limit. In contrast, the ETD methods considered here are AP for all components.

The essential reason for this difference lies in the distinct behavior of $\phi_0$ and the higher-order $\phi_k$ functions.  As shown in Lemmas \ref{lem:covariants} and \ref{lem:phitau}, $\phi_0(\Delta t L(\tau))$ retains contributions from the fast oscillatory modes of the hyperbolized system as $\tau\to0$, whereas for $k>0$ these contributions vanish in the corresponding limit of $\phi_k(\Delta t L(\tau))$.  This explains, in particular, why N{\o}rsett--Euler, which involves $\phi_1$, is AP, while Lawson--Euler, whose evolution operator is $\phi_0$, is not.  More generally, this mechanism suggests that the asymptotic behavior of other exponential integrators can often be anticipated from whether, and how, $\phi_0$ and the higher-order $\phi_k$ functions enter the method.

An interesting numerical observation is that, although Lawson methods are not AP for the full system, the $u$-component still converges to the limiting KdV solution in the stiff limit. This behavior is not explained by the present analysis and merits further investigation. The experiments also highlight the importance of well-prepared initial data in the nondissipative setting, where undamped fast modes can generate persistent oscillations. Higher-order well-prepared data substantially reduce these effects.  

Beyond the AP property, our numerical results suggest a connection between stiff order and uniform accuracy across different relaxation regimes. ETD methods with higher stiff order recover the expected convergence rates more uniformly over the range of $\tau$ considered, while methods with lower stiff order exhibit order reduction that depends on the relaxation parameter. A rigorous characterization of this relationship remains open.

For the KdVH system, we evaluate the required matrix exponentials and higher-order $\phi$-functions efficiently using the Lagrange--Sylvester formula. Our cost comparisons show that exponential methods are competitive with ImEx Runge--Kutta methods, although their relative efficiency depends on the implementation and several other factors. Although our analysis is specific to the KdVH system with Fourier pseudospectral discretization, the results may provide useful insight into exponential integration for more general nondissipative hyperbolic relaxation systems.

\appendix

\section{Well prepared initial data}
\label{sec:well_prepared_order_1}
%------------------%
We consider an asymptotic expansion of the solution of the KdVH system
\eqref{Eq:kdvH} in $\tau$,
\begin{equation}
U =
\begin{pmatrix}
u\\
v\\
w
\end{pmatrix}
=
U^0+\tau U^1+\tau^2U^2+\cdots .
\end{equation}
Retaining terms up to first order in $\tau$ and substituting into
\eqref{Eq:kdvH}, we obtain
\begin{align*}
u_t^0+\tau u_t^1
+(u^0+\tau u^1)(u_x^0+\tau u_x^1)
+w_x^0+\tau w_x^1 = O(\tau^2),\\
\tau(v_t^0+\tau v_t^1)
=v_x^0+\tau v_x^1-w^0-\tau w^1+O(\tau^2),\\
\tau(w_t^0+\tau w_t^1)
=v^0+\tau v^1-u_x^0-\tau u_x^1+O(\tau^2).
\end{align*}

At order $O(1)$, we have
\begin{equation}
u_t^0+u^0u_x^0+w_x^0=0,
\qquad
v_x^0=w^0,
\qquad
v^0=u_x^0.
\end{equation}
At order $O(\tau)$, we obtain
\begin{equation}
u_t^1+u^0u_x^1+u^1u_x^0+w_x^1=0,
\qquad
v_t^0=v_x^1-w^1,
\qquad
w_t^0=v^1-u_x^1.
\end{equation}

If we require the $u$-component of the KdVH solution to have the same
initial profile as the KdV equation for every $\tau$, then we take
$u^j(x,0)=0$ for all $j\geq1$. The first-order initial data for $v$ and $w$ are then determined by $v^1=w_t^0=v_{tx}^0$ and $w^1=v_x^1-v_t^0$, where
$v_t^0=u_{xt}^0=-(u^0u_x^0+w_x^0)_x$.

\vspace{1.5em}
\bibliographystyle{plain}
\bibliography{refs}
%===========================================================================

\vspace{1.5em}
\end{document}